% http://math.uctm.edu/journals/
\documentclass[10pt]{amsart}

\usepackage[T1]{fontenc}
\usepackage{amssymb,amsmath,amsthm}
\usepackage{color,hyperref}
\usepackage{bbm}
\usepackage{enumitem}

\title[Bifurcation into spectral gaps]{%
  Bifurcation into spectral gaps for a noncompact semilinear 
  Schr\"odinger equation with nonconvex potential}
\author{Troestler C.}
\address{Institut de Math\'ematique\\
  Universit\'e de Mons\\
  Place du Parc 20\\
  B-7000 Mons (Belgium)\\
}
\email{Christophe.Troestler@umons.ac.be}

\subjclass{Primary: 35J20, 35P30,  secondary: 35J10, 47A30}
\keywords{Gap-bifurcation for variational problems,
  strongly indefinite functionals, spectral decomposition}

%%%%%%%%%%%%%%%%%%%%%%%%%%%%%%%%%%%%%%%%%%%%%%%%%%%%%%%%%%%%%%%%%%%%%%%
%                               Macros
%%%%%%%%%%%%%%%%%%%%%%%%%%%%%%%%%%%%%%%%%%%%%%%%%%%%%%%%%%%%%%%%%%%%%%%

\setlength{\parskip}{.7\baselineskip}
\setlength{\parindent}{0pt}
\setlength{\overfullrule}{3pt}
\setlength{\textwidth}{12cm}
\setlength{\textheight}{20cm}

% International Journal of Pure and Applied Mathematics - IJPAM
\newtheoremstyle{theorem}%name
{10pt} % space above
{10pt} % space below
{\slshape} % bofy font
{\parindent} % ident - empty=no indent, \parindent= paragraph indent
{\bfseries} % thm head font
{. } % punctuation after thm head
{ } % space after thm head: `` ``=normal \newline=linebreak
{} % thm head specification
\theoremstyle{theorem}
\newtheorem{theorem}{Theorem}
\newtheorem{corollary}[theorem]{Corollary}
\newtheorem{prop}[theorem]{Proposition}
\newtheorem{lem}[theorem]{Lemma}

\newtheoremstyle{defi}%name
{10pt} % space above
{10pt} % space below
{\rmfamily} % bofy font
{\parindent} % ident - empty=no indent, \parindent= paragraph indent
{\bfseries} % thm head font
{. } % punctuation after thm head
{ } % space after thm head: `` ``=normal \newline=linebreak
{} % thm head specification
\theoremstyle{defi}
\newtheorem{rem}[theorem]{Remark}

% **********************************************************************
% special definitions for the paper
% **********************************************************************
\makeatletter

\newcommand{\2}{2^\star}

\newcommand{\Dom}{%
  \mathop{\mathstrut%--- otherwise the baseline of D is not correct!
    \mathbb D}\nolimits}

\newcommand{\IZ}{{\mathbb Z}}
\newcommand{\IR}{{\mathbb R}}
\newcommand{\IC}{{\mathbb C}}
\newcommand{\IB}{{\mathrm B}}
\newcommand{\C}{{\mathcal C}}
\newcommand{\Leb}{{\mathrm L}}
\newcommand{\wto}{\rightharpoonup}
\newcommand{\id}{{\mathbbm 1}}
\let\subset=\subseteq % keep semantic name
\let\phi=\varphi
\let\liminf=\varliminf
\let\limsup=\varlimsup
\@ifundefined{leqslant}{}{\let\le=\leqslant}
\@ifundefined{geqslant}{}{\let\ge=\geqslant}
\newcommand{\intd}{{\,\mathrm d}}
\newcommand{\restrict}{\mathclose\upharpoonright}
\newcommand{\Order}{{\mathrm O}}
\newcommand{\order}{{\mathrm o}}
\newcommand{\D}{{\C^\infty_{\text{c}}}}
\newcommand{\Q}{{\mathcal Q}}
\newcommand{\E}{{\mathcal E}}

\newcommand{\loc}{{\mathrm{loc}}}
\newcommand{\<}{\langle}
\renewcommand{\>}{\rangle}
\DeclareMathOperator{\cl}{cl}

%% PDF
\definecolor{mygreen}{rgb}{0.2,0.47,0.2}
\definecolor{myblue}{rgb}{0.23,0.33,0.95}
\definecolor{myred}{rgb}{0.67,0.03,0.06}

\@ifundefined{hypersetup}{}{%
  \hypersetup{%
    pagebackref=true,
    pdfauthor={Christophe Troestler},
    pdftitle={Bifurcation into spectral gaps for a noncompact semilinear 
      Schrodinger equation with nonconvex potential},
    pdfsubject={bifurcation in spectral gaps},
    pdfkeywords={bifurcation, noncompact problems},
    % pdfmenubar=false,
    colorlinks=true,
    urlcolor=myblue,
    citecolor=myblue,
    linkcolor=myred,
    pdfpagemode=UseNone,
    pdfstartview=FitH,
  }}

\makeatother
%%%%%%%%%%%%%%%%%%%%%%%%%%%%%%%%%%%%%%%%%%%%%%%%%%%%%%%%%%%%%%%%%%%%%%%
%                           End of macros
%%%%%%%%%%%%%%%%%%%%%%%%%%%%%%%%%%%%%%%%%%%%%%%%%%%%%%%%%%%%%%%%%%%%%%%

\begin{document}
\begin{abstract}
  \sloppy
  This paper shows that the nonlinear 
  periodic eigenvalue problem
  \begin{equation*}
    \begin{cases}
      -\Delta u + V(x) u - f(x,u) = \lambda u,\\
      u \in H^1(\IR^N),
    \end{cases}    
  \end{equation*}
  has a nontrivial branch of solutions emanating from
  the upper bound of every spectral gap of $-\Delta + V$.
  No convexity condition is assumed.
  The following result of independent interest is also proven:
  the direct sum $Y \oplus Z$
  in $H^1(\IR^N)$ associated to a decomposition of the
  spectrum of $-\Delta+V$ remains ``topologically direct'' in
  the $\Leb^p$'s (in the sense that the
  projections from $Y+Z$ onto $Y$ and $Z$ are  $\Leb^p$-continuous).
\end{abstract}

\maketitle

\section*{Introduction}
%----------------------------------------------------------------------

The purpose of this paper is to show that the nonlinear 
periodic eigenvalue problem
\begin{equation}
  \label{DiffEqn}    
  \begin{cases}
    -\Delta u + V(x) u - f(x,u) = \lambda u,\\
    u \in H^1(\IR^N),
  \end{cases}
\end{equation}
with $V$, $f$ being $\IZ^N$-periodic in $x$ and $f$ being superquadratic
but subcritical, has nontrivial branches of solutions bifurcating 
from the upper bound of every spectral gap of $-\Delta + V$ on 
$\Leb^2(\IR^N)$.

From now on, let us consider one of these spectral gaps, say $(a,b)
\subset \rho(-\Delta+V)$.  Of course, it is no lack of generality to
suppose that $0 \in (a,b)$.
This article is inspired from a previous one with M.~Willem~\cite{trch} 
where it is proven that (\ref{DiffEqn}) possesses a 
nontrivial solution for every $\lambda \in (a,b)$.  This approach was
subsequently refined by A.~Szulkin~\cite{Szulkin} who was able to pass
from $f \in \C^1$ to $f\in \C^0$.  
Here we will consider slightly weaker assumptions than 
in~\cite{Szulkin}, namely
\begin{list}{}{\topsep0ex \itemsep.3ex}
\item[$(f1)$]
  $V\in \Leb^\infty(\IR^N)$ and $f\in\C^0(\IR^N\times \IR)$ are
  1-periodic in $x_k$, $1\le k \le N$, and the linear operator
  $$
    D:\Leb^2(\IR^N) \to \Leb^2(\IR^N) : u \mapsto -\Delta u+V(x) u$$
  with domain $\Dom(D) = H^2(\IR^N)$ is  invertible 
  (with continuous inverse);
\item[$(f2)$]
  there exists $2< p<\2 := 2N/(N-2)$ and $c>0$ such that for 
  all $(x,u) \in \IR^N\times\IR$~:
  $|f(x,u)| \le c(1 + |u|^{p-1})$;
\item[$(f3)$]
  $f(x,u) = \order(|u|)$ uniformly in $x\in \IR^N$ as $u \to 0$;
\item[$(f4)$]
  there exists $\alpha >2$ such that~: for every $u\in\IR$ 
  and every   $x \in \IR^N$,
  $ 0 \le  \alpha F(x,u) \le f(x,u)u$;
\item[$(f5)$]
  $\displaystyle
  \liminf_{|u| \to \infty} \,
  \mathop{\min\vphantom{\liminf}}\limits_{x \in [0,1]^N} F(x,u) >0$.
\end{list}
where $F(x,u) := \int_0^u f(x,v) \intd v$.
The proofs given in~\cite{Szulkin} are still valid under
$(f1)$--$(f5)$ and so there exists a nontrivial
solution $u_\lambda$ of~(\ref{DiffEqn}) for all $\lambda \in (a,b)$.
(See section~\ref{section-bifurcation} for more details.)
This solution is obtained as a critical point of
$$
  \E_\lambda : H^1(\IR^N) \to \IR : u \mapsto
  \tfrac{1}{2} \int_{\IR^N} |\nabla u|^2 + (V-\lambda) u^2
  - \int_{\IR^N} F(x,u) \intd x.$$
Indeed, under $(f1)$--$(f5)$, $\E_\lambda$ is well defined on 
$H^1(\IR^N)$ and possesses the linking geometry 
(see section~\ref{section-bifurcation}).
The main improvement of~\cite{trch,Szulkin} with respect to
previous works on~(\ref{DiffEqn}) 
(see~\cite{Charles} and the references therein) 
is the removal of any convexity
condition upon $F$.  However in %these works, 
the latter, it was proved that
$u_\lambda$ actually bifurcates from $(\lambda, u) = (b,0)$;
and so a question raises itself: does this remain true
for nonconvex $F$'s?  The question is here settled positively
under the additional assumption:
\begin{list}{}{\topsep0ex \itemsep.3ex}
\item[$(f6)$]
  there exists a nonnegative $\IZ^N$-periodic function 
  $B \in \Leb^\infty(\IR^N)\setminus\{0\}$ and $\beta < \2$ such that
  $F(x,u) \ge B(x) |u|^\beta$ for all $x\in \IR^N$ and all
  $u$ in a neighborhood of $0$;
\end{list}
which, together with $(f4)$, may be seen as a local
``pinching condition''.  A global one was used
in~\cite{Charles} (see condition $(P)$, p.~20).
Note that  $(f4)$ implies $\beta \ge \alpha$.
Actually, since a possible $B(x)$ is
$\min\{ \liminf_{u \to 0} F(x,u) |u|^{-\beta}, 1\}$
and $F$ is periodic, $(f6)$ means that the set of $x \in [0,1]^N$
satisfying $\liminf_{u \to 0} F(x,u) |u|^{-\beta} >0$ has 
nonzero measure.
The main theorem of this paper reads as follows.

\penalty-1000
\begin{theorem}
\label{mainresult}
  Let $(f1)$--$(f6)$ hold  and $(a,b)$ be the spectral gap of $D$
  containing~$0$.  Then, for each $\lambda \in (a,b)$, there exists a
  nontrivial solution $u_\lambda$ of~(\ref{DiffEqn}) such that
  $$
    \E_\lambda(u_\lambda) 
    = \Order\bigl( (b-\lambda)^{\beta/(\beta-2) - N/2} \bigr) \to 0
    \quad\hbox{as } \lambda \to b.$$
  Furthermore, if $\beta < 2 + 4/N$,
  $$
    \| u_\lambda \| = 
    \Order\bigl( (b - \lambda)^{1/(\beta -2) - N/4} \bigr) \to 0,$$
  where $\|\cdot\|$ denotes the usual norm on $H^1(\IR^N)$.
\end{theorem}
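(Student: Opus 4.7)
My plan is to leverage the Szulkin-type linking characterization of $u_\lambda$ and derive the stated decay from a sharp upper bound on the minimax level $c_\lambda := \E_\lambda(u_\lambda)$. Decomposing $H^1(\IR^N) = Y \oplus Z$ according to the negative and non-negative spectral subspaces of $D - \lambda$, the linking structure yields $c_\lambda \le \sup_{y \in Y,\, t \ge 0} \E_\lambda(y + te)$ for any fixed direction $e \in Z\setminus\{0\}$. Since the bilinear form $a_\lambda(u, v) := \int \nabla u \cdot \nabla v + (V-\lambda)uv$ is $Y$--$Z$ orthogonal with $a_\lambda \le 0$ on $Y$ and $a_\lambda(z, z) \ge (b-\lambda)\|z\|_2^2$ on $Z$, and since $F \ge 0$ by $(f4)$, one can effectively reduce the supremum over $y$ to $y = 0$ modulo a cross term $\int [F(x, y+te) - F(x, te)]\intd x$; controlling the latter quantitatively requires the $L^p$-continuity of the spectral projections $P_Y, P_Z$ advertised in the abstract.

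The key is then to produce a test function $e = e_R$ concentrated near the band edge $b$. A natural choice is $e_R(x) := \psi_b(x)\chi(x/R)$ where $\psi_b$ is a Bloch eigenfunction of $D$ at energy $b$ (so $D\psi_b = b\psi_b$, with $|\psi_b|^2$ periodic) and $\chi \in \D(\IR^N)$ is a fixed nonnegative cutoff. Expanding $\nabla e_R$, integrating by parts, and using $D\psi_b = b\psi_b$ together with periodic averaging (the cross terms involving $\psi_b \nabla \psi_b$ and $(\nabla \chi)(x/R)\chi(x/R)$ cancel at leading order because $\int_{[0,1]^N}\nabla(|\psi_b|^2) = 0$), one obtains
\[
  a_\lambda(e_R, e_R) \le (b-\lambda)\|e_R\|_2^2 + C\, R^{N-2},\quad
  \|e_R\|_2^2 \asymp R^N,\quad
  \int_{\IR^N} B|e_R|^\beta\intd x \asymp R^N.
\]
By $(f6)$, $\int F(x, te_R)\intd x \ge t^\beta \int B|e_R|^\beta$ as soon as $\|te_R\|_\infty$ is small. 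Optimizing $\tfrac{t^2}{2} a_\lambda(e_R, e_R) - t^\beta \int B|e_R|^\beta$ over $t \ge 0$ yields a maximum $\asymp [(b-\lambda)R^N + R^{N-2}]^{\beta/(\beta-2)} / R^{2N/(\beta-2)}$; balancing with the choice $R := (b-\lambda)^{-1/2}$ gives exactly $c_\lambda = \Order((b-\lambda)^{\beta/(\beta-2) - N/2})$. At the optimum, $\|t^* e_R\|_\infty = \Order((b-\lambda)^{1/(\beta-2)}) \to 0$, so $(f6)$ indeed applies.

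For the norm bound when $\beta < 2 + 4/N$, I would test $\E_\lambda'(u_\lambda) = 0$ against $u_\lambda$ to get $a_\lambda(u_\lambda, u_\lambda) = \int f(x, u_\lambda)u_\lambda$, which by $(f4)$ is at most $\tfrac{2\alpha}{\alpha-2}\,c_\lambda$. Writing $u_\lambda = y_\lambda + z_\lambda$ and testing the equation separately against $y_\lambda$ and $z_\lambda$, then using the spectral lower bounds $a_\lambda(z, z) \ge (b-\lambda)\|z\|_2^2$ and $|a_\lambda(y, y)| \ge (\lambda - a)\|y\|_2^2$ together with $\|y_\lambda\|_p, \|z_\lambda\|_p \le C\|u_\lambda\|_p$ from the $L^p$-continuity of projections, one obtains $\|u_\lambda\|_2^2 \le C c_\lambda/(b - \lambda) = \Order((b-\lambda)^{2/(\beta-2)-N/2})$; combined with $\|u_\lambda\|_{H^1}^2 \le C(a_\lambda(z, z)/(b-\lambda) + \|u_\lambda\|_2^2)$, this yields $\|u_\lambda\| = \Order((b-\lambda)^{1/(\beta-2) - N/4})$. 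The main obstacle is the rigorous analysis of $e_R$ (in particular handling a band-edge eigenfunction with possibly nontrivial Bloch quasi-momentum, where $\psi_b$ is not genuinely periodic) and the quantitative reduction of the $y$-supremum in the linking step, both of which hinge on the $L^p$-continuity of the spectral projections.
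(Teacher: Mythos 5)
Your high-level strategy is aligned with the paper: bound $c_\lambda$ using the linking characterization with a cut-off Bloch-wave direction, balance at $R=(b-\lambda)^{-1/2}$, and then deduce $\|u_\lambda\|$ from the $c_\lambda$-bound by testing $\partial\E_\lambda(u_\lambda)=0$ against the $Y/Z$-components. The normalization of your $e_R$ differs from the paper's $\Psi_R := R^{-N/2}\eta(\cdot/R)\Psi$ but the resulting scalings match. However, there are two concrete gaps.

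First, and most seriously, your plan to ``reduce the supremum over $y$ to $y=0$ modulo a cross term $\int[F(x,y+te)-F(x,te)]$, controlled via $\Leb^p$-continuity of the projections'' does not work, and the obstruction is exactly the point of the paper: $F$ is \emph{not} convex. Without convexity, shifting by $y$ can decrease $\int F$ by an amount not controlled by any $\Leb^p$-norm of $y$, so one cannot discard the $y$-dependence in $\int F(x,y+te)$ this way. The paper circumvents this with two ideas you are missing: (a) a \emph{convex lower bound} $H$ for $\min\{|u|^\alpha,|u|^\beta\}$ (Lemma~\ref{ConvexLowerBound}), so that $\int BF(x,\cdot)\ge\kappa\phi$ with $\phi(u):=\int B\,H(u)$ convex, even, and pseudo-homogeneous; this lets one write $\phi(y+s\zeta_\lambda)\ge\min\{s^\alpha,s^\beta\}\bigl(2^{1-\beta}\phi(\zeta_\lambda)-\phi(y/s)\bigr)$; and (b) the observation that the supremum defining $c_\lambda$ only needs to run over pairs $(y,s)$ with $\E_\lambda(y+s\zeta_\lambda)\ge 0$, which (using $F\ge 0$ and the $Y$-coercivity of $-\Q_\lambda$) forces $\|y/s\|^2\le\Q_\lambda(\zeta_\lambda)/\alpha_\lambda=\Order(b-\lambda)$, hence $\phi(y/s)=\Order\bigl((b-\lambda)^{\beta/2}\bigr)$, which is negligible against $\phi(\zeta_\lambda)\gtrsim(b-\lambda)^{(\beta-2)N/4}$ precisely when $\beta<\2$. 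The $\Leb^p$-continuity of the projections is used elsewhere (to get $|\zeta_\lambda|_\infty=\Order\bigl((b-\lambda)^{N/4}\bigr)$ so that $(f6)$ applies to $s\zeta_\lambda$), not to control the interaction with~$y$.

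Second, your $e_R=\psi_b\chi(\cdot/R)$ is not in $Z$, so it cannot be the linking direction as stated. The paper takes $\zeta_\lambda:=Q\Psi_{R(\lambda)}$ and proves (Lemma~\ref{ZetaEstimations}) that the correction $P\Psi_R$ is $\Order(b-\lambda)$ in $H^1$-norm, which is what makes the $Z$-projected test function inherit all the scaling properties of $\Psi_R$; this estimate, derived from $\Q_b(\Psi_R)=\Order(R^{-2})$ and $\|\partial\Q_b(\Psi_R)\|^2=\Order(R^{-2})$, is a nontrivial step absent from your sketch. Your norm estimate sketch reaches the right conclusion, but the paper's route is cleaner: it tests against $z_\lambda-y_\lambda$ and uses the $H^1$-coercivity constants $\alpha_\lambda,\beta_\lambda$ of Lemma~\ref{SplittingQ} directly, together with a split of $\int f(x,u_\lambda)u_\lambda$ into $\{|u_\lambda|\le 1\}$ and its complement with $\Leb^2$ and $\Leb^{p'}$ bounds on $f$, rather than passing through $\Leb^2$-coercivity and interpolation as you propose.
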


The above condition on $\beta$ is optimum in the sense that, if
$F(x,u) = |u|^\gamma$, then $\alpha \le \gamma \le \beta$  so that
the best choice for $\beta$ is $\beta := \gamma$; but $\gamma < 2 + 4/N$
is necessary for a bifurcation to take place at $b$ when 
$b = \inf \sigma(D)$
(see~\cite{Charles5}).
Moreover, as observed by T.~K\"upper and C.A.~Stuart 
\cite{Charles4,Charles3}, no bifurcation  can occur at $a$.
But of course, if $f$ is such that $-f$ satisfies $(f1)$--$(f6)$, 
a branch of nontrivial solutions bifurcates from $(\lambda,u) = (a,0)$
with the convergence rates of theorem~\ref{mainresult}
(change the signs of $\lambda$ and $u$ to recover the initial problem).

Before going, in section~\ref{section-bifurcation}, through the
estimates from which bifurcation will eventually result, some 
preliminary discussion about the spectral properties of the quadratic
part $\Q_\lambda$ of $\E_\lambda$ is necessary.  It is carried out in
section~\ref{section-spectral-gap}.
In particular, it is said that $H^1(\IR^N)$ splits as a  direct sum
of two closed subspaces $Y$ and $Z$ on which $\Q_\lambda$ is negative
and positive definite respectively.  It is of great importance
for the projection from $Y+Z$ onto $Y$
(or $Z$) to be continuous in the $\Leb^p$'s---and not only in $H^1$.
This is not the case for every direct sum in $H^1$.  However
appendix~\ref{section-spectral-Lp} shows that it is true for the
particular sum associated to the positive and negative part of the
spectrum of $-\Delta + V$.
We chose to expound this in an appendix not to interrupt the arguments
about bifurcation.

Some natural questions are left unanswered by this paper.
First, it would be interesting to know whether the bifurcating
branch is continuous.  Second, as we  said, there is no 
bifurcation at $a$.  But does any  nontrivial solution go to
$\infty$ as $\lambda \to a$?

\noindent\textit{Notations.}
We will write $|u|_p$ for the norm of $u$ in 
the Lebesgue space $\Leb^p(\IR^N)$,
$(\cdot|\cdot)_2$ for the inner product in $\Leb^2(\IR^N)$, 
$\|\cdot\|$ for the usual norm on the Sobolev space $H^1(\IR^N)$,
$\partial \mathcal{F}(u)$ will stand for the Fr\'echet derivative of
the function $\mathcal{F}$ at~$u$,
$\Dom(A)$ for the domain of the operator $A$, and
$\IB(x,R)$ will denote the open ball in $\IR^N$ with center $x$
and radius $R$.

\section{The quadratic form and Bloch waves}
%----------------------------------------------------------------------
\label{section-spectral-gap}

Let $\Q_\lambda : H^1(\IR^N) \to \IR$ be the quadratic form
$$
  \Q_\lambda(u) := \int_{\IR^N} 
               |\nabla u|^2 + (V(x) -\lambda) u^2 \intd x.$$
Since $0$ lies in a gap of the spectrum $\sigma(D)$,
spectral theory asserts that $H^1(\IR^N)$ splits as a direct sum
of two closed subspaces
$Y$ and $Z$ on which $\Q_0$ is negative and positive definite
respectively:
\begin{equation}
  \Q_0(y) \le -\alpha_0 \|y\|^2,
  \qquad
  \Q_0(z) \ge \beta_0 \|z\|^2
  \label{form-sign}
\end{equation}
for all $y\in Y$ and $z\in Z$.  Moreover, $Y$ and $Z$ are 
orthogonal in $\Leb^2(\IR^N)$, $\Q_0(y+z) = \Q_0(y) + \Q_0(z)$, and
the spectral gap is $(a,b)$ with
\begin{equation}
  a:= \sup_{\textstyle{y \in Y \atop |y|_2 =1}}  \Q_0(y)
  < 0 <
  \inf_{\textstyle{z \in Z \atop |z|_2 =1}} \Q_0(z) =: b.
  \label{spectral-gap}
\end{equation}
The same spectral splitting holds for any $\lambda \in (a,b)$.
This is made precise by the following lemma.

\begin{lem}
  \label{SplittingQ}
  Let $\lambda \in (a,b)$.  Then
  $$
    \Q_\lambda(y) \le -\alpha_\lambda \|y\|^2
    \quad\hbox{and}\quad
    \Q_\lambda(z) \ge \beta_\lambda \|z\|^2$$
  for all $y\in Y$ and $z \in Z$, where
  $$
    \alpha_\lambda :=
    \begin{cases}
      \alpha_0 (1- \lambda/a)&  \text{if } \lambda\le 0,\\
      \alpha_0&                 \text{otherwise},
    \end{cases}
    \qquad
    \beta_\lambda :=
    \begin{cases}
      \beta_0&                  \text{if } \lambda\le 0,\\
      \beta_0(1-\lambda/b)&     \text{otherwise.}
    \end{cases}
    $$
  Consequently, $\Q_\lambda(z) - \Q_\lambda(y) \ge  N_\lambda \|y+z\|^2$
  with $N_\lambda := {1\over 2} \min\{ \alpha_\lambda, \beta_\lambda \}$.
\end{lem}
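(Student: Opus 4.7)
The plan is to reduce everything to the $\lambda=0$ inequalities (\ref{form-sign}) and (\ref{spectral-gap}) via the identity
$$\Q_\lambda(u) = \Q_0(u) - \lambda |u|_2^2,$$
by interpolating between the $H^1$-bound $\Q_0(y) \le -\alpha_0 \|y\|^2$ and the $L^2$-bound $\Q_0(y) \le a\, |y|_2^2$ (the latter being an immediate homogenization of the definition of $a$ in (\ref{spectral-gap})); and symmetrically for $Z$ using $\Q_0(z) \ge \beta_0 \|z\|^2$ and $\Q_0(z) \ge b\, |z|_2^2$.

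For the estimate on $Y$, when $\lambda > 0$ the term $-\lambda |y|_2^2$ is non-positive, so $\Q_\lambda(y) \le \Q_0(y) \le -\alpha_0 \|y\|^2$ directly. When $\lambda \le 0$, I will rewrite
$$\Q_\lambda(y) = \Bigl(1 - \frac{\lambda}{a}\Bigr)\Q_0(y) + \lambda \Bigl(\frac{\Q_0(y)}{a} - |y|_2^2\Bigr).$$
The parenthesis in the second summand is non-negative (since $a<0$ and $\Q_0(y) \le a |y|_2^2$ together give $\Q_0(y)/a \ge |y|_2^2$), and multiplying by $\lambda \le 0$ makes that whole summand $\le 0$. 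As $1 - \lambda/a \ge 0$, combining with (\ref{form-sign}) yields $\Q_\lambda(y) \le -\alpha_0 (1 - \lambda/a) \|y\|^2 = -\alpha_\lambda \|y\|^2$. The $Z$-side is handled symmetrically: for $\lambda \le 0$ directly from $\Q_\lambda(z) \ge \Q_0(z)$; for $\lambda > 0$ via the analogous rewriting $\Q_\lambda(z) = (1 - \lambda/b)\Q_0(z) + \lambda(\Q_0(z)/b - |z|_2^2)$, whose second summand is now non-negative, giving $\Q_\lambda(z) \ge \beta_0(1 - \lambda/b)\|z\|^2$.

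For the last inequality I will exploit $\Q_\lambda$-orthogonality of $Y$ and $Z$: since $|y+z|_2^2 = |y|_2^2 + |z|_2^2$ by the stated $L^2$-orthogonality, combined with $\Q_0(y+z) = \Q_0(y) + \Q_0(z)$ this gives $\Q_\lambda(y+z) = \Q_\lambda(y) + \Q_\lambda(z)$. Therefore
$$\Q_\lambda(z) - \Q_\lambda(y) \ge \alpha_\lambda \|y\|^2 + \beta_\lambda \|z\|^2 \ge \min\{\alpha_\lambda, \beta_\lambda\}\bigl(\|y\|^2 + \|z\|^2\bigr),$$
and the conclusion follows from the elementary inequality $\|y+z\|^2 \le 2(\|y\|^2 + \|z\|^2)$.

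There is no serious obstacle; the only point requiring some care is keeping the signs straight when interpolating. The $L^2$-bound $\Q_0(y) \le a|y|_2^2$ is used only when multiplied by a non-positive coefficient (hence for $\lambda \le 0$ on $Y$), and likewise $\Q_0(z) \ge b|z|_2^2$ is used only when multiplied by a non-negative coefficient (hence for $\lambda > 0$ on $Z$); the opposite regimes of $\lambda$ are covered by the trivial monotonicity of $\Q_\lambda$ in $\lambda$.
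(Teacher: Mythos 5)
Your argument for the two coercivity bounds is the paper's proof written out a bit more explicitly: both use the homogenized spectral-gap bound $\Q_0(y) \le a\,|y|_2^2$ (resp.\ $\Q_0(z) \ge b\,|z|_2^2$) to absorb the $-\lambda|u|_2^2$ perturbation when $\lambda$ has the inconvenient sign, and monotonicity of $\Q_\lambda$ in $\lambda$ otherwise. You also correctly fill in the short derivation of the final $N_\lambda$ estimate via $\|y+z\|^2 \le 2(\|y\|^2+\|z\|^2)$, which the paper leaves as an unproved "consequence"; the $\Q_\lambda$-additivity remark you make in passing is, however, not actually used for that step.
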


\begin{proof}
  We only deal with $\Q_\lambda$ on $Y$, the proof on $Z$ being similar.
  If $\lambda >0$, $\Q_\lambda(y) \le \Q_0(y) \le -\alpha_0 \|y\|^2$.
  If $\lambda \le 0$, 
  \begin{equation*}
    \Q_\lambda(y) =   \Q_0(y) - \lambda |y|_2^2  
                      \le \Q_0(y) - (\lambda/a) \Q_0(y)
                  \le - (1- \lambda/a) \alpha_0 \|y\|^2 .
    \qedhere
  \end{equation*}
\end{proof}

%%For each $\lambda \in (a,b)$, a critical point $u_\lambda$ will be
%%found by a minimax procedure.  
Since $b \in \sigma(D)$,
we know that there exists a Bloch wave $\Psi$ in
$H^2_\loc(\IR^N) \cap \C^1(\IR^N) \cap \Leb^\infty(\IR^N)$ that satisfies 
$-\Delta \Psi +V \Psi = b\Psi$
(see~\cite{Eastham}).  For $R \in (0,+\infty)$, let us set
$$
  \Psi_R (x) :=
        R^{-N/2}\, \eta(x/R)\, \Psi(x)$$    
where $\eta \in \D(\IR^N;[0,1])$ equals $1$ on $\IB(0,1)$.
Using the fact that $\Psi$ is uniformly almost-periodic
in the sense of Besicovich~\cite{Besicovich}, we get the 
following (see~\cite{Charles2,Charles}):
\vskip0pt
  {\openup2\jot
  \halign to\textwidth{\hbox to3em{$#$\hfil}& $\displaystyle #$\hfil
                  \tabskip0pt plus1000pt\crcr
  (B1)&  \Psi_R \in H^2(\IR^N) \cap \C^1(\IR^N);  \cr
  (B2)&  \limsup_{R \to \infty} \|\Psi_R\| < +\infty ;\cr
  (B3)&  \lim_{R \to \infty}\, R^2
         \int_{\IR^N}  |\nabla \Psi_R|^2 + (V -b) \Psi_R^2  
         \in [0, +\infty);  \cr
  (B4)&  \lim_{R \to \infty}\, R^2\,
         \bigl| -\Delta \Psi_R + (V-b) \Psi_R \bigr|_2^2
         \in [0,+\infty);  \cr
  (B5)&  \lim_{R \to \infty}\, R^{(\gamma -2) N/2}\,
         \int_{\IR^N} B(x) |\Psi_R |^\gamma \intd x  \in (0,+\infty)
         \hbox{ for all } \gamma \in [1,+\infty);  \cr
  (B6)&  | \Psi_R |_\infty = \Order(R^{-N/2} ).\cr }}%
%%\vskip-2\jot\relax
The following consequences of $(B3)$--$(B4)$  will be used 
in place of them:
\vskip0pt
  {\openup2\jot
  \halign to\textwidth{\hbox to3em{$#$\hfil}& $\displaystyle #$\hfil
                \tabskip=0pt plus1000pt\crcr
  (B'3)&  \Q_b(\Psi_R) = \Order\bigl( R^{-2} \bigr)
          \hbox{ as }R \to \infty;\cr
  (B'4)&  \| \partial\Q_b(\Psi_R)\|^2 = \Order\bigl( R^{-2} \bigr)
          \hbox{ as } R \to \infty.\cr}}%
\vskip-1\jot
Let $P$ be the projector onto $Y$ and $Q = \id - P$ the projector 
onto $Z$.  For $\lambda \in (a,b)$, let us define
$$
  \zeta_\lambda := Q \Psi_{R(\lambda)} \in Z$$
with $R(\lambda) := (b-\lambda)^{-1/2}$.
The following holds:

\begin{lem}
\label{ZetaEstimations}
  When $\lambda \to b$, we have, for all $\gamma \in [2,\2]$,
  $$
    %\eqalign{
    \limsup \|\zeta_\lambda\| < +\infty;  
    \qquad
    \Q_\lambda(\zeta_\lambda) = \Order\bigl( b-\lambda \bigr);  $$
  $$
    \liminf{} (b-\lambda)^{-(\gamma-2)N/4} 
       \int_{\IR^N} B(x) |\zeta_\lambda|^\gamma \intd x > 0;    $$
  $$
    \zeta_\lambda \in \Leb^\infty(\IR^N)
    \hbox{ and\/{} }
    | \zeta_\lambda |_\infty 
    =\Order\bigl( (b-\lambda)^{N/4} \bigr).$$
\end{lem}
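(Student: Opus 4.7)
The plan is to write $\Psi_R = y_R + \zeta_\lambda$ with $y_R := P\Psi_R$ and to exploit that $Y$ and $Z$ are simultaneously $\Leb^2$- and $\Q_\lambda$-orthogonal. Indeed, polarising $\Q_0(y+z) = \Q_0(y) + \Q_0(z)$ gives $b_0(y,z) = 0$ for $y\in Y$, $z\in Z$, where $b_0$ denotes the symmetric bilinear form associated to $\Q_0$; together with the $\Leb^2$-orthogonality this yields $b_\lambda(y,z) = 0$, hence $\Q_\lambda(\Psi_R) = \Q_\lambda(y_R) + \Q_\lambda(\zeta_\lambda)$ and $\partial\Q_\lambda(\Psi_R)[y_R] = 2\,b_\lambda(\Psi_R, y_R) = 2\,\Q_\lambda(y_R)$.

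The quadratic-form bounds come in two steps. Since $R = (b-\lambda)^{-1/2}$, $(B'3)$ and $(B2)$ yield
$\Q_\lambda(\Psi_R) = \Q_b(\Psi_R) + (b-\lambda)|\Psi_R|_2^2 = \Order(R^{-2}) + \Order(b-\lambda) = \Order(b-\lambda)$.
Writing $\partial\Q_\lambda(\Psi_R) = \partial\Q_b(\Psi_R) + 2(b-\lambda)(\Psi_R,\cdot)_2$ and bounding each summand with $(B'4)$ and $(B2)$ gives $\|\partial\Q_\lambda(\Psi_R)\|_{H^{-1}} = \Order((b-\lambda)^{1/2})$. Combining the identity $\partial\Q_\lambda(\Psi_R)[y_R] = 2\,\Q_\lambda(y_R)$ with the lower bound $\alpha_0\|y_R\|^2 \le |\Q_\lambda(y_R)|$ from Lemma~\ref{SplittingQ} (applicable since $\alpha_\lambda = \alpha_0$ for $\lambda$ near $b$) forces $\|y_R\| = \Order((b-\lambda)^{1/2})$ and $|\Q_\lambda(y_R)| = \Order(b-\lambda)$. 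By subtraction, $\Q_\lambda(\zeta_\lambda) = \Order(b-\lambda)$; and since $\beta_\lambda = \beta_0(b-\lambda)/b$ near $b$, Lemma~\ref{SplittingQ} applied to $\zeta_\lambda \in Z$ gives $\|\zeta_\lambda\| = \Order(1)$.

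For the $\Leb^\gamma$ lower bound, the inequality $|\Psi_R|^\gamma \le 2^{\gamma-1}(|y_R|^\gamma + |\zeta_\lambda|^\gamma)$ together with Sobolev's $|y_R|_\gamma \le C\|y_R\| = \Order((b-\lambda)^{1/2})$ gives $\int_{\IR^N} B\,|y_R|^\gamma \intd x = \Order((b-\lambda)^{\gamma/2})$. For $\gamma < \2$, the exponent $\gamma/2$ strictly exceeds $(\gamma-2)N/4$, so this error is $\order((b-\lambda)^{(\gamma-2)N/4})$, whereas $(B5)$ makes $\int_{\IR^N} B\,|\Psi_R|^\gamma \intd x$ equivalent to a positive multiple of $(b-\lambda)^{(\gamma-2)N/4}$; the strictly positive $\liminf$ then follows.

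The $\Leb^\infty$ estimate is the main obstacle. By $(B6)$, $|\Psi_R|_\infty = \Order((b-\lambda)^{N/4})$, so one is reduced to the same bound on $y_R$, which Sobolev does \emph{not} provide from the $H^1$ estimate alone when $N \ge 3$. My plan is to derive $(D-b)y_R = P h_R$ from $D\Psi_R = b\Psi_R + h_R$, where $h_R$ is compactly supported on an annulus of radius $\asymp R$ with $|h_R|_\infty = \Order(R^{-N/2-1})$; then either (i) invoke the exponential off-diagonal decay of the Schwartz kernel of the spectral projector $P$ at a gap energy (a Combes--Thomas estimate for $D$), which yields $|y_R|_\infty \le C|\Psi_R|_\infty$ directly, or (ii) combine the $\Leb^p$-continuity of $P$ from the appendix with elliptic regularity for $(D-b)|_Y$ to bootstrap from $|y_R|_p \le C_p|\Psi_R|_p = \Order(R^{N/p - N/2})$ up to the desired $\Leb^\infty$ bound by taking $p$ large.
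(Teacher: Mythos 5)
Your argument for the first three assertions is essentially the paper's. You decompose $\Psi_R = y_R + \zeta_\lambda$, use the $\Q_\lambda$- and $\Leb^2$-orthogonality of $Y$ and $Z$, the coercivity of $-\Q_\lambda$ on $Y$ from Lemma~\ref{SplittingQ}, and $(B'3)$--$(B'4)$ to control $y_R$, then transfer the estimates to $\zeta_\lambda$. Two remarks on the details. First, your bound $\|y_R\| = \Order\bigl((b-\lambda)^{1/2}\bigr)$ is what $(B'4)$ actually delivers, since $\|\partial\Q_b(\Psi_R)\| = \Order(R^{-1}) = \Order\bigl((b-\lambda)^{1/2}\bigr)$; the paper writes $\|P\Psi_R\| = \Order(b-\lambda)$, which appears to be a slip. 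The consequence, which you noticed, is that the error term $\int B|y_R|^\gamma = \Order\bigl((b-\lambda)^{\gamma/2}\bigr)$ is $\order\bigl((b-\lambda)^{(\gamma-2)N/4}\bigr)$ only for $\gamma < \2$; the endpoint $\gamma = \2$ is not reached this way (and the main estimates only ever use $\gamma = \alpha$ or $\beta$, both $<\2$, so nothing downstream is affected). The paper estimates $\int B|\zeta_\lambda|^\gamma$ via the triangle inequality in $\Leb^\gamma(B\intd x)$ rather than by $(a+b)^\gamma \le 2^{\gamma-1}(a^\gamma+b^\gamma)$, but that is a cosmetic difference.

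Where you go astray is the $\Leb^\infty$ estimate. You call it ``the main obstacle'' and propose either a Combes--Thomas off-diagonal decay estimate or a bootstrap from $\Leb^p$-continuity of $P$ plus elliptic regularity. Neither is needed: Proposition~\ref{LebCountinuityProj} already establishes that the restriction of $Q$ to $H^1\cap\Leb^p$ is $\Leb^p$-continuous \emph{for all} $p\in[1,+\infty]$, the case $p=\infty$ included. Since $\Psi_R\in H^1\cap\Leb^\infty$, one gets directly $|\zeta_\lambda|_\infty = |Q\Psi_R|_\infty \le C\,|\Psi_R|_\infty = \Order\bigl((b-\lambda)^{N/4}\bigr)$ by $(B6)$. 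You should re-read the statement of that proposition: the $p=\infty$ case is exactly the nontrivial ingredient you were trying to reconstruct, and it is the reason the appendix is stated for $p\in[1,\infty]$ and not merely $[1,\infty)$.
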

  
\begin{proof}
  Since $R \mapsto \Psi_R$ is bounded near $\infty$ and $Q$ is continuous,
  $\lambda \mapsto \zeta_\lambda$ is bounded near~$b$.
  
  All along the rest of this proof, we will write $R$ for $R(\lambda)$.
  When $\lambda$ is close to $b$, it follows from the coercivity of
  $-\Q_\lambda$ on $Y$ that
  $$
    %\eqalign{
    2 \alpha_0 \|P \Psi_R\|^2
     \le -2 \Q_\lambda(P\Psi_R) 
      =  - \bigl\< \partial\Q_\lambda(\Psi_R), P\Psi_R \bigr\> %\cr
     \le \| \partial\Q_\lambda(\Psi_R)\| \, \|P\Psi_R\| %\cr}
     $$
  and so  $\|P\Psi_R\| = \Order\bigl( \|\partial \Q_\lambda(\Psi_R)\| \bigr)$.  
  But   $\partial\Q_\lambda(\Psi_R) = \partial\Q_b(\Psi_R) + (b-\lambda)
  \Order(|\Psi_R|_2)$. Thus, using $(B2)$, we infer
  $\partial\Q_\lambda(\Psi_R) = \Order(b-\lambda)$ and
  $$
    \|P\Psi_R\| = \Order(b-\lambda)
    \quad\hbox{as }\lambda \to b.$$
  The second estimate follows from
  $$
    \Q_\lambda(\zeta_\lambda) = 
    \Q_\lambda(\Psi_R - P\Psi_R) =
    \Q_b(\Psi_R) + (b-\lambda) |\Psi_R|_2^2 - \Q_\lambda(P\Psi_R) =
    \Order(b-\lambda).$$
  As for the third one, it is suffices to note 
  $\bigl( \int B\, |\zeta_\lambda|^\gamma \bigr)^{1/\gamma}
   \ge \bigl| \bigl( \int B\, |\Psi_R|^\gamma \bigr)^{1/\gamma} 
   - \bigl( \int B |P\Psi_R|^\gamma \bigr)^{1/\gamma} \bigr|$ and to use
  $(B5)$ and
  \begin{math}
    \int B\, |P\Psi_R|^\gamma
    = \Order(\|P\Psi_R\|^\gamma)
    = \Order\bigl((b-\lambda)^\gamma\bigr)
    =  \order\bigl( (b-\lambda)^{(\gamma-2)N/4} \bigr)
  \end{math}.

  Finally, the last assertion follows from
  proposition~\ref{LebCountinuityProj} (appendix~\ref{section-spectral-Lp}) 
  and~$(B6)$.
  Indeed $\Psi_R \in H^1(\IR^N) \cap \Leb^\infty(\IR^N)$ and
  the restriction of $Q$ to $H^1 \cap \Leb^\infty$ ranges in
  $H^1 \cap \Leb^\infty$ and is $\Leb^\infty$-continuous.
\end{proof}

\section{Bifurcation}
%----------------------------------------------------------------------
\label{section-bifurcation}

First of all, we shall explain the minimax construction
that gives a critical point $u_\lambda$ for all $\lambda \in (a,b)$.
Let us define the minimax value:
$$
  c_\lambda := \mathop{\inf\vphantom{\sup}}_{t\ge 0\mathstrut}\,
               \sup_{\eta_\lambda(t,M_\lambda)}   \E_\lambda$$
where $\eta_\lambda(t,u)$ is the flow generated by some pseudogradient
vector field approximating $-\nabla\E_\lambda$ 
(see~\cite{Szulkin,trch}) and let $M_\lambda$ be the set
$$
  M_\lambda := \bigl\{
  y+ s \zeta_\lambda : y\in Y,\ s\ge 0,\ 
  \|y+s\zeta_\lambda\| \le \rho_\lambda  \bigr\}$$
with $\rho_\lambda$ large enough such as $\sup_{\partial M_\lambda} \E_\lambda <0$
where $\partial M_\lambda$ is the boundary of $M_\lambda$ in 
$Y \oplus \IR \zeta_\lambda$.
Under slightly stronger assumptions than $(f1)$--$(f5)$,
it is proven in~\cite{Szulkin} that there
exists a Palais-Smale sequence at level $c_\lambda$ and that, for
any such Palais-Smale sequence $(u_n)$, there exists a sequence 
of translations $(k_n) \subset \IZ^N$ such that 
$\bigl( u_n(\cdot -k_n) \bigr)_n$ possesses a subsequence that
weakly converges to a nonzero critical point of $\E_\lambda$.

This conclusion remains valid under $(f1)$--$(f5)$.
Let us quickly explain why.  First, we keep having~(1.7) of~\cite{Szulkin}
that reads
$$
  \forall \delta >0,\ \exists c_1 >0,\quad
  F(x,u) \ge c_1 |u|^\alpha - \delta |u|^2 
  \hbox{ on } \IR^N \times \IR.$$
Indeed, for large $u$'s, say $|u| \ge \rho$, $(f4)$ and $(f5)$ imply
that
$$
  F(x,u) \ge c_2 |u|^\alpha$$
and $c_1$ can be taken small enough so that $c_1 \le c_2$ and\
$ c_1 |u|^\alpha - \delta |u|^2  \le 0 \le F(x,u)$
for all $(x,u) \in \IR^N \times [-\rho, \rho]$.
As a consequence, $\E_\lambda$ possesses the so called ``linking
geometry''.   The existence of a $(PS)_{c_\lambda}$-sequence
then follows---for this part relies only on the above geometry and the weak
continuity of $u \mapsto \partial\E_\lambda(u)$.
Finally, inequalities~(1.10) and~(1.11) of~\cite{Szulkin} need
not $f(x,u)u$ to be positive but only nonnegative---see 
eq.~(\ref{Estim2}) below.  So, any $(PS)_{c_\lambda}$-sequence
contains a subsequence that weakly converges, up to translations,
to a nonzero critical point.

For all $\lambda\in (a,b)$, let $u_\lambda \neq 0$ be such a limit
point.
We will show that $u_\lambda$ bifurcates from $(\lambda, u) = (b,0)$.
Let us start with some estimates of the energy of $u_\lambda$.

\begin{prop}
\label{clambdaEstimations}
  Let assumptions $(f1)$--$(f5)$ hold.  Then
  \begin{enumerate}
  \item\label{bound-E} $0 \le \E_\lambda(u_\lambda) \le c_\lambda$.
  \end{enumerate}
  If in addition  $(f6)$ is assumed and $\beta < 2 + 4/N$, we have
  \begin{enumerate}[resume]
  \item\label{bound-c_lambda}
    $c_\lambda = \Order\bigl( (b-\lambda)^{\beta/(\beta-2) - N/2} \bigr)
    \to 0$ as $\lambda \xrightarrow{<} b$.
  \end{enumerate}
\end{prop}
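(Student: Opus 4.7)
For part~\ref{bound-E}, I would exploit that $u_\lambda$ is a critical point: testing $\partial\E_\lambda(u_\lambda) = 0$ against $u_\lambda$ yields $\Q_\lambda(u_\lambda) = \int f(x,u_\lambda)u_\lambda \intd x$, hence
$$
  \E_\lambda(u_\lambda) = \int_{\IR^N}\bigl[\tfrac{1}{2} f(x,u_\lambda) u_\lambda - F(x,u_\lambda)\bigr] \intd x.
$$
Hypothesis $(f4)$ gives $0 \le F(x,u) \le \tfrac{1}{\alpha} f(x,u)u$ with $\alpha>2$, so the integrand is nonnegative and $\E_\lambda(u_\lambda) \ge 0$. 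For the upper bound $\E_\lambda(u_\lambda)\le c_\lambda$, apply the same identity to a Palais--Smale sequence $(u_n)$ at level $c_\lambda$: boundedness of $(u_n)$ and $\<\partial\E_\lambda(u_n), u_n\>\to 0$ force $\int[\tfrac{1}{2}f(x,u_n)u_n - F(x,u_n)]\intd x \to c_\lambda$, and Fatou's lemma (after the $\IZ^N$-translation that produces weak and a.e.\ convergence to $u_\lambda$ along a subsequence) closes the argument since the integrand is nonnegative.

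For part~\ref{bound-c_lambda}, I begin with $c_\lambda \le \sup_{M_\lambda}\E_\lambda$ (valid at time $t=0$ in the min-max) and bound the right-hand side. For $u = y + s\zeta_\lambda \in M_\lambda$, the $\Q_\lambda$-diagonality on $Y \oplus Z$ together with Lemmas~\ref{SplittingQ} and~\ref{ZetaEstimations} give
$$
  \E_\lambda(y + s\zeta_\lambda) \le -\tfrac{\alpha_0}{2}\|y\|^2 + \tfrac{s^2}{2}\Q_\lambda(\zeta_\lambda) - \int_{\IR^N}F(x, y+s\zeta_\lambda)\intd x
$$
with the middle term $\Order(s^2(b-\lambda))$. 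Testing first along $y=0$: Lemma~\ref{ZetaEstimations} yields $|s\zeta_\lambda|_\infty = \Order(s(b-\lambda)^{N/4})$, so for $s$ in the forthcoming optimal range the pointwise bound $|s\zeta_\lambda|\le \rho_0$ holds and $(f6)$ gives $F(x, s\zeta_\lambda)\ge B(x) s^\beta|\zeta_\lambda|^\beta$; integrating and using once more Lemma~\ref{ZetaEstimations} yields $\int F(x, s\zeta_\lambda) \intd x \ge c\, s^\beta (b-\lambda)^{(\beta-2)N/4}$. Hence
$$
  \E_\lambda(s\zeta_\lambda) \le C s^2 (b-\lambda) - c\, s^\beta (b-\lambda)^{(\beta-2)N/4},
$$
whose maximum is attained at $s_{\mathrm{opt}} \sim (b-\lambda)^{1/(\beta-2) - N/4}$ and equals $\Order((b-\lambda)^{\beta/(\beta-2) - N/2})$. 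The hypothesis $\beta<2+4/N$ ensures $s_{\mathrm{opt}}\to 0$, legitimising the use of $(f6)$ a posteriori.

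The main obstacle is that $M_\lambda$ is genuinely two-dimensional: we must take the sup over all $y\in Y$. I would control this by comparing $\E_\lambda(y+s\zeta_\lambda)$ to $\E_\lambda(s\zeta_\lambda)$ through the exact identity
$$
  \E_\lambda(y+s\zeta_\lambda) - \E_\lambda(s\zeta_\lambda) = \tfrac{1}{2}\Q_\lambda(y) - \int_0^1\!\!\int_{\IR^N} f(x, s\zeta_\lambda + ty)\, y\intd x\intd t,
$$
and estimating the nonlinear remainder via the consequence $|f(x,u)| \le \delta|u| + C_\delta |u|^{p-1}$ of $(f2)$--$(f3)$, H\"older's inequality, Sobolev embedding, and the decay of $|\zeta_\lambda|_2, |\zeta_\lambda|_p$ provided by Lemma~\ref{ZetaEstimations}. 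The strong coercivity $\tfrac{1}{2}\Q_\lambda(y)\le -\tfrac{\alpha_0}{2}\|y\|^2$ from Lemma~\ref{SplittingQ} forces the maximising $y$ to have norm $\order(s_{\mathrm{opt}})$ and the resulting correction to be of higher order than $(b-\lambda)^{\beta/(\beta-2)-N/2}$; this is where the restriction $\beta<2+4/N$ bites, because it is exactly what keeps $s_{\mathrm{opt}}$ small enough that both the pointwise smallness required by $(f6)$ and the absorption of the $y$-perturbation hold simultaneously.
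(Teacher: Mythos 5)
Part~\ref{bound-E} of your proposal is correct and essentially follows the paper's route: the same identity $\E_\lambda(u)-\tfrac12\<\partial\E_\lambda(u),u\>=\int[\tfrac12 f(x,u)u - F(x,u)]$ together with the nonnegativity from $(f4)$ gives the lower bound, and for the upper bound you pass to the limit along the translated Palais--Smale sequence. You use Fatou where the paper quantifies the loss at infinity by a term $\mu_\infty\ge 0$; both rest on the same nonnegative integrand and on the compact embedding $H^1\hookrightarrow \Leb^r_\loc$, so the two arguments are interchangeable.

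Part~\ref{bound-c_lambda} contains a genuine gap, and it is exactly the point where the paper's convexity device is indispensable. Your plan is to compare $\E_\lambda(y+s\zeta_\lambda)$ with the one-dimensional reduction $\E_\lambda(s\zeta_\lambda)$ via the identity
$$
  \E_\lambda(y+s\zeta_\lambda)-\E_\lambda(s\zeta_\lambda)
  = \tfrac12\Q_\lambda(y)-\int_0^1\!\!\int f(x, s\zeta_\lambda+ty)\,y\intd x\intd t
$$
and to absorb the remainder into the coercivity $-\tfrac{\alpha_0}{2}\|y\|^2$. This cannot be closed with $(f2)$--$(f3)$. The leading contribution of the remainder is $\delta s\,|\zeta_\lambda|_2\,|y|_2\lesssim \delta s\|y\|$, and here $|\zeta_\lambda|_2$ does \emph{not} decay: the third estimate in lemma~\ref{ZetaEstimations} with $\gamma=2$ gives $\liminf\int B\,|\zeta_\lambda|^2>0$, so $|\zeta_\lambda|_2$ is bounded away from~$0$. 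Maximizing $-\tfrac{\alpha_0}{2}\|y\|^2 + C\delta s\|y\|$ over $\|y\|$ yields a correction of size $\sim\delta^2 s^2$, and at $s=s_{\mathrm{opt}}\sim (b-\lambda)^{1/(\beta-2)-N/4}$ this is $\sim(b-\lambda)^{2/(\beta-2)-N/2}$, whereas $c_\lambda\sim (b-\lambda)^{\beta/(\beta-2)-N/2}$; since $\beta>2$, the exponent $2/(\beta-2)-N/2$ is strictly smaller, so your correction is of \emph{lower} order, i.e.\ much larger than the quantity you are trying to bound. Restricting to pairs with $\E_\lambda(y+s\zeta_\lambda)\ge 0$ (which forces $\|y\|\lesssim s(b-\lambda)^{1/2}$, as in the paper) only improves the correction to $\delta\,s^2(b-\lambda)^{1/2}$, still larger than $c_\lambda$ by a factor $(b-\lambda)^{-1/2}$. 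In short, the one-parameter reduction followed by a perturbative control of the $Y$-direction does not work: the $Y$-perturbation dominates.

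Two further points are glossed over. First, $(f6)$ only holds near $u=0$, and $y+s\zeta_\lambda$ is not pointwise small because $y\in Y$ carries no $\Leb^\infty$ control, so you cannot apply $(f6)$ directly inside $\int F(x,y+s\zeta_\lambda)$. The paper first extends the pinching to \emph{all} $u$, using the monotonicity of $u\mapsto F(x,u)|u|^{-\alpha}$ implied by $(f4)$, to get $F(x,u)\ge\kappa_2 B(x)\min\{|u|^\beta,|u|^\alpha\}$ globally. Second, and most importantly, the paper then replaces $\min\{|u|^\beta,|u|^\alpha\}$ by the convex even minorant $H$ of lemma~\ref{ConvexLowerBound}. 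Convexity of $\phi(u)=\int B(x)H(u)$ delivers the clean comparison
$$
  \phi(y+s\zeta_\lambda)\;\ge\; \min\{s^\alpha,s^\beta\}\bigl(2^{1-\beta}\phi(\zeta_\lambda)-\phi(y/s)\bigr),
$$
and since $\|y/s\|^2=\Order(\Q_\lambda(\zeta_\lambda)/\alpha_\lambda)=\Order(b-\lambda)$ on the relevant set, $\phi(y/s)$ is negligible against $\phi(\zeta_\lambda)$. The nonlinearity itself then cancels $\tfrac{s^2}{2}\Q_\lambda(\zeta_\lambda)$, and $\tfrac12\Q_\lambda(y)\le 0$ is simply dropped. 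The convex lower bound $H$ (not of $F$, of its minorant) is the essential ingredient; without it, or an equivalent device, the supremum over $y$ cannot be tamed to the needed order.
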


\begin{proof}
  \textsl{(\ref{bound-E})}  Let $\lambda$ be fixed.  
  Since $\E_\lambda(u) - {1\over 2} \partial\E_\lambda(u)u
  = \int {1\over2}f(x,u) - F(x,u) 
  \ge ({\alpha\over 2} -1) \int F(x,u)$,
  it is clear that any critical point of $\E_\lambda$ occurs 
  at a nonnegative level.
  
  Let $(u_n)$ be a Palais-Smale sequence at level $c_\lambda$ such that
  $u_n \wto u_\lambda$ in $H^1(\IR^N)$.
  The limit $u_\lambda$ is a critical point of $\E_\lambda$.
  Let us define
  $$
    \mu_\infty :=
    \lim_{R \to \infty} \, \limsup_{n \to \infty}
    \int\limits_{|x| > R}
    {\textstyle{1\over 2}}f(x,u_n)u_n - F(x,u_n) \intd x.$$
  It is clear that $\mu_\infty \ge 0$ and moreover, taking in account
  that $H^1(\IR^N)$ is compactly embedded in all $\Leb^r_\loc(\IR^N)$
  for $2 < r < \2$, one readily proves that (see e.g.~\cite{trch2}):
  \begin{equation*}
    \lim_{n \to \infty} \int_{\IR^N}
      {\textstyle{1\over 2}} f(x,u_n)u_n - F(x,u_n) \intd x
    = \int_{\IR^N}  {\textstyle{1\over 2}} 
      f(x,u_\lambda)u_\lambda - F(x,u_\lambda) \intd x + \mu_\infty.    
  \end{equation*}
  This can be rewritten as
  $$
    c_\lambda
      = \E_\lambda(u_n) - {\textstyle{1\over 2}} 
        \<\partial\E_\lambda(u_n), u_n \> + \order(1)
      = \E_\lambda(u_\lambda) - {\textstyle{1\over 2}} 
        \<\partial\E_\lambda(u_\lambda), u_\lambda \> + \mu_\infty,
  $$
  which implies $c_\lambda \ge \E_\lambda(u_\lambda)$.

  \begin{sloppypar}
  \textsl{(\ref{bound-c_lambda})}
  It follows from the very definition of $c_\lambda$ that it is
  bounded above by $\sup\bigl\{ \E_\lambda(y+s\zeta_\lambda) :
  y \in Y,\ s \ge 0 \bigr\}$.  
  Assumption~$(f6)$ tells us that there exists some $r>0$ such that
  $$
    F(x,u) \ge B(x) |u|^\beta
    \quad\hbox{for all $|u|\le r$ and $x \in \IR^N$.}$$
  Now  $(f4)$ says that $u \mapsto  F(x,u) |u|^{-\alpha}$ 
  is nondecreasing on $[0, +\infty)$ and nonincreasing on $(-\infty, 0]$,
  and so
  $$
    F(x,u) \ge F(x, \pm r) r^{-\alpha} |u|^\alpha
           \ge \kappa_1 B(x) |u|^\alpha
    \quad\hbox{for all $|u|\ge r$ and $x \in \IR^N$,}$$
  where $\kappa_1 := r^{\beta -\alpha}$.  
  Consequently,
  $$
    F(x,u) \ge \kappa_2 B(x) \min\{ |u|^\beta, |u|^\alpha \}
    \quad\hbox{for all }(x,u) \in \IR^N \times \IR$$
  with $\kappa_2 := \min\{ 1, \kappa_1 \}$.  
  Let $H$ a convex function given by 
  lemma~\ref{ConvexLowerBound} (appendix~\ref{Existenceof H}).
  Then,
  \begin{equation}
    \E_\lambda(y + s \zeta_\lambda)
    \le {\textstyle{1\over 2}} \Q_\lambda(y) 
    +   {\textstyle{1 \over 2}} s^2 \Q_\lambda(\zeta_\lambda)
    -   \kappa_2 \phi(y + s\zeta_\lambda)
    \label{PhiLower}    
  \end{equation}
  where $\phi(u) := \int B(x)  H(u) \intd x$.
  Using lemma~\ref{ConvexLowerBound} \ref{convex-H}
  and~\ref{pseudo-homog}, we infer that,
  for all $u,v\in H^1(\IR^N)$,
  $
    \phi(u+v) \le 
    {\textstyle {1\over 2}} \bigl( \phi(2u) + \phi(2v) \bigr)
    \le 2^{\beta-1} \bigl( \phi(u) + \phi(v) \bigr)$
  and consequently, for all $u,w$,
  $$
    \phi(u+w) \ge 2^{1-\beta} \phi(u) - \phi(w)$$
  (remember $\phi$ is even).
  Lemma~\ref{ConvexLowerBound} \ref{pseudo-homog}
  together with this inequality imply
  \begin{align}
    \phi(y+s\zeta_\lambda)
    &\ge \min\{ s^\beta, s^\alpha \} \,\phi(\zeta_\lambda + y/s)
    \notag\\
    &\ge \min\{ s^\beta, s^\alpha \} 
    \bigl( 2^{1-\beta} \phi(\zeta_\lambda) - \phi(y/s) \bigr).
    \label{PhiConvex}
  \end{align}
  On the other hand,
  since $\sup_{y \in Y,\, s\ge 0} \E_\lambda(y+s\zeta_\lambda) > 0$,
  we may as well just take the supremum on the $(y,s)$'s that
  satisfy $\E_\lambda(y+s\zeta_\lambda) \ge 0$ and $s > 0$.
  Using sucessively $F \ge 0$, lemma~\ref{SplittingQ}, and
  $\phi(u) \le |B|_\infty \min\{ |u|^\beta_\beta, |u|^\alpha_\alpha \}
  \le \kappa_3 \min_{\gamma=\alpha, \beta} \|u\|^\gamma$, we get
  \begin{align}
    \E_\lambda(y + s\zeta_\lambda) \ge 0
    &\quad\Rightarrow\quad
    \Q_\lambda(y) + s^2 \Q_\lambda(\zeta_\lambda) \ge 0  \notag\\
    &\quad\Rightarrow\quad
    \| y/s \|^2 \le \Q_\lambda(\zeta_\lambda)/\alpha_\lambda   \notag\\
    &\quad\Rightarrow\quad
    \phi(y/s) \le \kappa_3 \min_{\gamma=\alpha, \beta} 
               \bigl( \Q_\lambda(\zeta_\lambda)
                      /\alpha_\lambda \bigr)^{\gamma/2} .
     \label{orderOny/s}    
  \end{align}
  Taking account of $\Q_\lambda(y) \le 0$ and 
  (\ref{PhiLower})--% (\ref{PhiConvex}) 
  (\ref{orderOny/s}), we get
  \begin{align}
    \E_\lambda(y + s \zeta_\lambda)
    &\le {\textstyle{1 \over 2}} s^2 \Q_\lambda(\zeta_\lambda)
          -   \kappa_2  \min\{ s^\beta, s^\alpha \} 
              \bigl( 2^{1-\beta} \phi(\zeta_\lambda) 
                     - \phi(y/s) \bigr)  \notag\\
    &\le \max_{\gamma=\alpha, \beta}\,
         {\textstyle{1 \over 2}} s^2 \Q_\lambda(\zeta_\lambda)
          -  \kappa_2 s^\gamma \Phi_\lambda  \notag\\
    \intertext{where}
    \Phi_\lambda &:= 2^{1-\beta} \phi(\zeta_\lambda) 
             - \kappa_3 \min_{\gamma=\alpha, \beta} 
               \bigl( \Q_\lambda(\zeta_\lambda)
                      /\alpha_\lambda \bigr)^{\gamma/2}   \notag
    \intertext{and thus, provided that $\Phi_\lambda >0$
      (see below),}
    \E_\lambda(y + s \zeta_\lambda)
    &\le \kappa_4 \max_{\gamma=\alpha, \beta}
         \Q_\lambda(\zeta_\lambda)^{\gamma/(\gamma-2)}\,
         \Phi_\lambda^{-2/(\gamma-2)}  ,
      \label{UpperBoundOnE} 
  \end{align}
  and $\kappa_4 := \max\bigl\{  ({1\over 2}-{1\over\gamma})  
  (\gamma \kappa_2)^{-2/(\gamma -2)} : \gamma=\alpha, \beta   \bigr\}$.
  \end{sloppypar}

  Let $\lambda \xrightarrow{<} b$.
  Then $\alpha_\lambda = \alpha_0$ and 
  $\Q_\lambda(\zeta_\lambda) = \Order(b-\lambda)$, so that
  \begin{equation}
    \min_{\gamma = \alpha, \beta}
    \bigl( \Q_\lambda(\zeta_\lambda)/\alpha_\lambda \bigr)^{\gamma/2}
    = \Order\bigl( (b-\lambda)^{\beta/2} \bigr).
    \label{termy/s}    
  \end{equation}
  By lemma~\ref{ZetaEstimations}, $|\zeta_\lambda|_\infty$ is 
  bounded, say by $r$.
  Lemma~\ref{ConvexLowerBound}~\ref{H-near0} implies there exists some
  $\kappa_5 > 0$ such that $H(u) \ge \kappa_5 |u|^\beta$ for
  $|u| \le r$.
  Consequently, one can infer
  $$
    \phi(\zeta_\lambda) 
    \ge  \kappa_5  \int B\, |\zeta_\lambda|^\beta
    \ge  \kappa_6  (b-\lambda)^{(\beta-2)N/4}$$
  which, together with~(\ref{termy/s}), yields
  $$
    \liminf_{\lambda \to b}{}
    (b-\lambda)^{-(\beta-2) N/4}\, \Phi_\lambda  >0,$$
  because 
  $(b-\lambda)^{\beta/2} = \order\bigl( (b-\lambda)^{(\beta-2) N/4} \bigr)$.
  This, incidentally, implies that $\Phi_\lambda >0$.
  It then suffices to plug the estimates of 
  $\Q_\lambda(\zeta_\lambda)$ and $\Phi_\lambda$ in 
  equation~(\ref{UpperBoundOnE}) to obtain
  \begin{equation*}
    c_\lambda = \Order\Bigl( \max_{\gamma=\alpha,\beta}
    (b-\lambda)^{(\gamma - \delta)/(\gamma-2)}    \Bigr)
    \qquad\text{where } \delta := \tfrac{1}{2}N(\beta-2).
  \end{equation*}
  To get the desired result, simply note that
  $\alpha/(\alpha-2) - \tfrac{1}{2}N (\beta-2)/(\alpha-2)
   \ge \beta/(\beta-2) -N/2 >0 $ 
  whenever $\beta < 2 + 4/N$.
  %The exponent is decreasing in $\gamma > 2$.
\end{proof}

\begin{theorem}
  Assume $(f1)$--$(f6)$.  Then, when $\lambda \to b$,
  $ \|u_\lambda\| = \Order(\sqrt{c_\lambda/ N_\lambda})$, and in particular
  $$
    \|u_\lambda\| = \Order\bigl( (b-\lambda)^{1/(\beta-2)-N/4} \bigr)
    \to 0$$
  if $\beta < 2 + 4/N$.
\end{theorem}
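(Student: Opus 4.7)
The plan is to combine the energy bound of Proposition~\ref{clambdaEstimations}(\ref{bound-E}) with the spectral splitting of Lemma~\ref{SplittingQ}, using also the intrinsic $\Leb^2$-spectral estimates on $Y$ and $Z$.  As in the proof of Proposition~\ref{clambdaEstimations}(\ref{bound-E}), testing $\partial\E_\lambda(u_\lambda) = 0$ against $u_\lambda$ and combining with $\E_\lambda(u_\lambda) \le c_\lambda$ and $(f4)$ yields
\[
  \int f(x,u_\lambda)\,u_\lambda \le \tfrac{2\alpha}{\alpha-2}\,c_\lambda.
\]

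Decompose $u_\lambda = y_\lambda + z_\lambda$ with $y_\lambda := Pu_\lambda \in Y$, $z_\lambda := Qu_\lambda \in Z$.  Since $Y$ and $Z$ are $\Leb^2$-orthogonal and $D$-invariant, they are $\Q_\lambda$-orthogonal (the bilinear form polarizing $\Q_\lambda$ vanishes on $Y\times Z$).  Testing $\partial\E_\lambda(u_\lambda) = 0$ against $z_\lambda - y_\lambda$ therefore gives $\Q_\lambda(z_\lambda) - \Q_\lambda(y_\lambda) = \int f(x,u_\lambda)(z_\lambda - y_\lambda)$, and Lemma~\ref{SplittingQ} turns the theorem into the single estimate
\[
  \int f(x,u_\lambda)(z_\lambda - y_\lambda) = \Order(c_\lambda).
\]

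To establish this, write the integral as $\int fu_\lambda - 2\int fy_\lambda$; the first piece is already $\Order(c_\lambda)$.  For the second, $(f4)$ forces $f(x,u_\lambda)$ to have the same sign as $u_\lambda$, so $M := |f(x,u_\lambda)|/|u_\lambda|\ge 0$ with $f(x,u_\lambda) = Mu_\lambda$, and by Cauchy--Schwarz
\[
  \Bigl(\textstyle\int fy_\lambda\Bigr)^{\!2}
   \le \Bigl(\textstyle\int fu_\lambda\Bigr)\Bigl(\textstyle\int M y_\lambda^2\Bigr)
   \le K\,c_\lambda\cdot\sup M\cdot |y_\lambda|_2^2.
\]
The intrinsic $\Leb^2$-spectral estimate $\Q_0(y) \le a|y|_2^2$ on $Y$ (from~(\ref{spectral-gap})) gives $|y_\lambda|_2^2 \le -\Q_\lambda(y_\lambda)/(\lambda-a) = -(\int fy_\lambda)/(\lambda-a)$, so $|\int fy_\lambda| \le K\,c_\lambda\sup M/(\lambda-a)$.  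For $\lambda$ close to $b$, $\lambda-a\ge(b-a)/2$ is bounded below, and by $(f3)$--$(f2)$, $\sup M$ is bounded as soon as $|u_\lambda|_\infty$ is; standard elliptic regularity (Moser iteration) applied to $(D-\lambda)u_\lambda = f(x,u_\lambda)$ supplies such a bound, provided $\|u_\lambda\|$ stays bounded (which is a consequence of the Palais--Smale construction).  Thus $\int f(z_\lambda-y_\lambda) = \Order(c_\lambda)$ and $\|u_\lambda\|^2 = \Order(c_\lambda/N_\lambda)$.

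For the explicit rate, for $\lambda$ near $b$ one has $\alpha_\lambda = \alpha_0$ bounded below and $\beta_\lambda = \tfrac{\beta_0}{b}(b-\lambda)$, so $N_\lambda = \tfrac{\beta_0}{2b}(b-\lambda)$; together with Proposition~\ref{clambdaEstimations}(\ref{bound-c_lambda}) this yields $\sqrt{c_\lambda/N_\lambda} = \Order((b-\lambda)^{1/(\beta-2) - N/4})$, which vanishes exactly when $\beta < 2+4/N$.  The main obstacle is obtaining the a priori $\Leb^\infty$ bound on $u_\lambda$: this underlies the boundedness of $\sup M$ and hence the closure of the key estimate, and ultimately rests on standard subcritical elliptic regularity for the Euler--Lagrange equation.
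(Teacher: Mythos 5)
Your proof follows the paper's overall strategy — decompose $u_\lambda = y_\lambda + z_\lambda$, test $\partial\E_\lambda(u_\lambda)=0$ against $z_\lambda - y_\lambda$, invoke lemma~\ref{SplittingQ} to bound $N_\lambda\|u_\lambda\|^2$ by $\Q_\lambda(z_\lambda)-\Q_\lambda(y_\lambda)$, and reduce everything to the bound $\int f(x,u_\lambda) y_\lambda = \Order(c_\lambda)$ — but it diverges from the paper precisely at that last step, and the divergence opens a genuine gap. To bound $\int f y_\lambda$ you introduce the weight $M := |f(x,u_\lambda)|/|u_\lambda|$ and apply Cauchy--Schwarz, which requires $\sup_x M$ to be bounded \emph{uniformly in $\lambda$} near $b$. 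By $(f2)$--$(f3)$ this amounts to a uniform a priori bound $|u_\lambda|_\infty \le C$. You acknowledge this is the ``main obstacle'' and propose to get it from Moser iteration on $(D-\lambda)u_\lambda = f(x,u_\lambda)$, ``provided $\|u_\lambda\|$ stays bounded.'' But a uniform $H^1$-bound on the family $(u_\lambda)_{\lambda\to b}$ is essentially what the theorem is asserting (indeed more: $\|u_\lambda\|\to 0$), so this step is circular as stated. The Palais--Smale construction gives a bound on each fixed $(PS)_{c_\lambda}$-sequence for fixed $\lambda$, not a bound uniform in $\lambda$; extracting a $\lambda$-uniform $H^1$ bound would require redoing, for the PS-sequences, essentially the same estimate as in the proof. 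Moreover, even granting a uniform $H^1$ bound, the constant in the $\Leb^\infty$ bound from elliptic regularity must itself be controlled uniformly in $\lambda$, which also requires checking.

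The paper avoids any $\Leb^\infty$ estimate entirely, and this is the key point you should adopt. It deduces from $(f2)$--$(f3)$ the pointwise bound
$$
  f(x,u)u \ge \kappa_1^{-1} |f(x,u)|^2 \quad\text{if } |u|\le 1, \qquad
  f(x,u)u \ge \kappa_1^{-1} |f(x,u)|^{p'} \quad\text{if } |u|\ge 1,
$$
so that, writing $\Gamma := \{|u_\lambda|\le 1\}$ and using $\int f u_\lambda = \Order(c_\lambda)$, one gets $\|f(\cdot,u_\lambda)\|_{\Leb^2(\Gamma)} = \Order(c_\lambda^{1/2})$ and $\|f(\cdot,u_\lambda)\|_{\Leb^{p'}(\IR^N\setminus\Gamma)} = \Order(c_\lambda^{1/p'})$. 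Then H\"older against $|y_\lambda|_2$ and $|y_\lambda|_p$ (both $\lesssim\|y_\lambda\|$ by Sobolev embedding, $p<\2$) yields $\int |f||y_\lambda| \le \kappa (c_\lambda^{1/2}+c_\lambda^{1/p'})\|y_\lambda\|$, and one absorbs the linear-in-$\|y_\lambda\|$ term via Young's inequality into $\tfrac{1}{2}\alpha_\lambda\|y_\lambda\|^2$. Since $p'<2$, $c_\lambda^{1/p'} = \Order(c_\lambda^{1/2})$ as $c_\lambda\to 0$, and the conclusion $N_\lambda\|u_\lambda\|^2 = \Order(c_\lambda)$ follows without ever bounding $|u_\lambda|_\infty$. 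Your use of the $\Leb^2$-spectral inequality $-\Q_\lambda(y)\ge(\lambda-a)|y|_2^2$ is correct but also unnecessary once the H\"older/Young route is taken. I recommend you replace the weighted Cauchy--Schwarz step with this splitting-and-H\"older argument.
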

\penalty-1000

\begin{rem}
  1) $N_\lambda$ is defined in lemma~\ref{SplittingQ}.

  2) The above conclusion is slightly stronger than the one of theorem~9.6
  in~\cite{Charles}.  The latter indeed states (under some additional
  assumptions) that $\lim_{n \to \infty} (b-\lambda_n)^{-\theta} \|u_n\|
  =0$ for all $0 \le \theta < 1/(\beta-2) - N/4$, where $\lambda_n < b$ is
  a suitable sequence converging to $b$, and $u_n$ is a critical point 
  of $\E_{\lambda_n}$.
\end{rem}

\begin{proof}
  Using proposition~\ref{clambdaEstimations} and $(f4)$,  we infer
  \begin{align}
    c_\lambda
    \ge \E_\lambda(u_\lambda)
    &=   \E_\lambda(u_\lambda) - {\textstyle{1\over2}} 
        \< \partial\E_\lambda(u_\lambda), u_\lambda \>      
    =  \int  {\textstyle{1\over2}} f(x,u_\lambda)u_\lambda
        - F(x,u_\lambda)  \intd x                       \notag\\
    &\ge {\textstyle ({1\over 2}-{1\over \alpha})}
         \int f(x,u_\lambda) u_\lambda \intd x         
    \label{Estim1}          
  \end{align}
  \sloppy%
  Assumptions $(f2)$--$(f3)$ imply the existence of a constant
  $\kappa_1$ such that $|f(x,u)| \le \kappa_1 |u|$ if $|u| \le 1$
  and $|f(x,u)| \le (\kappa_1 |u|)^{p-1}$ if $|u| \ge 1$.
  Because $f(x,u)u \ge0$, this yields
  \begin{equation}
    f(x,u)u = |f(x,u)| \, |u| \ge
    \begin{cases}
      \kappa_1^{-1} |f(x,u)|^2&  \text{if } |u| \le 1; \\
      \kappa_1^{-1} |f(x,u)|^{p'}&  \text{if } |u| \ge 1.      
    \end{cases}
    \label{Estim2}    
  \end{equation}
  where $p' := p/(p-1)$ is the conjugate exponent to $p$.
  Fix $\lambda$ and set $\Gamma := \{ x \in \IR^N : |u_\lambda(x)| \le 1 \}$.
  Inequality~(\ref{Estim1}) can be rewritten
  $$
    c_\lambda \ge  \kappa_2  \biggl(
    \int_\Gamma f(x,u_\lambda) u_\lambda  \intd x  +
    \int_{\IR^N \setminus\Gamma} f(x,u_\lambda) u_\lambda  \intd x  
    \biggr)$$
  Combining this with~(\ref{Estim2}), we get
  \begin{equation}
    \begin{split}
      f_0&  := \biggl(  \int_\Gamma  |f(x,u_\lambda)|^2 \intd x
      \biggr)^{1/2}  \le  \kappa_3 c_\lambda^{1/2}    \\
      f_\infty& := \biggl(  \int_{\IR^N\setminus\Gamma}  
      |f(x,u_\lambda)|^{p'} \intd x
      \biggr)^{1/p'}  \le  \kappa_3 c_\lambda^{1/p'}      
    \end{split}
    \label{Estim3}
  \end{equation}
  with $\kappa_3 := \max\bigl\{  (\kappa_1\kappa_2^{-1})^{1/2},\ 
  (\kappa_1\kappa_2^{-1})^{1/p'}  \bigr\}$. 
  Let us write $u_\lambda = y_\lambda + z_\lambda$ with $y_\lambda \in Y$,
  $z_\lambda \in Z$.
  Lemma~\ref{SplittingQ} and $\partial\E_\lambda(u_\lambda) =0$
  imply
  \begin{align*}
    \beta_\lambda \|z_\lambda\|^2 + \alpha_\lambda \|y_\lambda\|^2&
     \le  \Q_\lambda(z_\lambda) - \Q_\lambda(y_\lambda)  \\
    & =   {\textstyle{1\over 2}}
          \bigl\<  \partial\Q_\lambda(u_\lambda), z_\lambda-y_\lambda 
          \bigr\>  \\
    & =   {\textstyle{1\over 2}}
          \int f(x,u_\lambda) (z_\lambda - y_\lambda) \intd x\\
    & =   {\textstyle{1\over 2}}
          \int f(x,u_\lambda) u_\lambda \intd x - 
          \int f(x,u_\lambda) y_\lambda \intd x \\
    \intertext{and then, using~(\ref{Estim1}) and~(\ref{Estim3}),}
    \beta_\lambda \|z_\lambda\|^2 + \alpha_\lambda \|y_\lambda\|^2&
     \le \kappa_4 c_\lambda + \int_{\IR^N} |f(x,u_\lambda)| \, |y_\lambda| \\
    &\le \kappa_4 c_\lambda + f_0 |y_\lambda|_2 + f_\infty |y_\lambda|_p\\
    &\le \kappa_4 c_\lambda + 
         \kappa_5 (c_\lambda^{1/2} + c_\lambda^{1/p'}) \|y_\lambda\| \\
    &\le \kappa_4 c_\lambda + {\textstyle{2 \over \alpha_\lambda}}
         \kappa_5^2 (c_\lambda^{1/2} + c_\lambda^{1/p'})^2 +
         {\textstyle{\alpha_\lambda \over 2}} \|y_\lambda\|^2
  \end{align*}
  for some $\kappa_4, \kappa_5 >0$ independent of $\lambda$.
  Thus, moving ${1\over 2}\alpha_\lambda \|y_\lambda\|^2$ to the 
  left-hand side, we have
  $$
    {\textstyle{1\over 2}} N_\lambda \|u_\lambda\|^2  \le
    \kappa_4 c_\lambda  +  (2\kappa_5^2/\alpha_\lambda) 
    (c_\lambda^{1/2} + c_\lambda^{1/p'})^2.$$
  Now let $\lambda \to b$.  Therefore $\alpha_\lambda = \alpha_0$,
  $c_\lambda \to 0$, so that $c_\lambda^{1/p'} = \Order(c_\lambda^{1/2})$
  and
  $$
    N_\lambda  \|u_\lambda\|^2 = \Order(c_\lambda).$$
  The second estimate of $\|u_\lambda\|$ is obtained by plugging
  the estimate of $c_\lambda$ of 
  proposition~\ref{clambdaEstimations} into the first one and using the fact
  $\lim_{\lambda \to b}  (b-\lambda)/N_\lambda < +\infty$.
  The positivity of $1/(\beta-2) - N/4$ is equivalent to
  $\beta < 2 + 4/N$.
\end{proof}

\appendix
\section{Spectral decomposition  of $-\Delta + V$}
\label{section-spectral-Lp}
%----------------------------------------------------------------------

In this appendix, we will show that the splitting $Y \oplus Z$
of $H^1(\IR^N)$ introduced in section~\ref{section-spectral-gap} 
remains a direct
sum in the $\Leb^p(\IR^N)$'s  for $2 \le p \le \2$, in the sense
that $\cl_{\Leb^p} Y \cap \cl_{\Leb^p} Z = \{0\}$ with $\cl_{\Leb^p}$
denoting the closure in $\Leb^p(\IR^N)$.
We will start with the following stronger proposition.

\begin{prop}
\label{LebCountinuityProj}
  Let $H^1(\IR^N) = Y \oplus Z$ where $Y$ (resp.\ $Z$) is the
  negative (resp.\ positive) eigenspace of $D$ in $H^1$,
  and $P : H^1 \to H^1$ (resp.\ $Q= \id - P$) be the projector
  onto $Y$ (resp.\ $Z$) parallel to $Z$ (resp.\ $Y$).
  Then, for any $p \in [1,+\infty]$, the restrictions of
  $P$ and $Q$ to $H^1 \cap \Leb^p$ range in $H^1 \cap \Leb^p$
  and are  $\Leb^p$-continuous.
\end{prop}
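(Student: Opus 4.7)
The plan is to represent $P$ as a Cauchy contour integral of the resolvent of $D$ and then transfer $\Leb^p$-boundedness from the resolvent to $P$ through a Combes--Thomas type kernel estimate. Since $V \in \Leb^\infty(\IR^N)$, one has $D \ge -|V|_\infty\,\id$ in the form sense, so $\sigma(D) \cap (-\infty, a]$ is a compact subset of the real line. One can therefore choose a closed rectifiable Jordan curve $\Gamma$ lying in $\rho(D) \cap \IC$ that winds once around $\sigma(D) \cap (-\infty, a]$ and around no other part of $\sigma(D)$. Standard holomorphic functional calculus for the selfadjoint operator $D$ then yields
$$
  P = \frac{1}{2\pi i} \oint_\Gamma (z\,\id - D)^{-1} \intd z.
$$

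The heart of the argument is a Combes--Thomas estimate: for each $z \in \Gamma$ the resolvent $(z\,\id - D)^{-1}$ admits an integral kernel $G_z$ satisfying
$$
  |G_z(x,y)| \le C e^{-c|x-y|}, \qquad x,y \in \IR^N,
$$
with constants $C, c > 0$ chosen uniformly in $z \in \Gamma$ (the distance from the compact $\Gamma$ to $\sigma(D)$ being strictly positive). To prove this one conjugates, for small $\mu \in \IR^N$,
$$
  e^{\mu\cdot x}(D - z)e^{-\mu\cdot x} = D - z - 2\mu\cdot\nabla + |\mu|^2,
$$
and observes that the perturbation $-2\mu\cdot\nabla + |\mu|^2$ is controlled by $D - z$ via a form estimate, so a Neumann series produces an inverse with $\Leb^2$-operator norm uniformly bounded on $\Gamma$. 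Local elliptic regularity for second-order operators with $\Leb^\infty$ coefficients then turns this $\Leb^2$ bound into a pointwise bound on $e^{\mu\cdot(x-y)} G_z(x,y)$, and optimising the direction of $\mu$ delivers the stated exponential decay.

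With the kernel estimate in hand, Schur's test gives, uniformly in $z \in \Gamma$ and $p \in [1,+\infty]$,
$$
  \bigl|(z\,\id - D)^{-1} u\bigr|_p \le \max\Bigl\{\, \sup_x \!\int |G_z(x,y)| \intd y,\ \sup_y \!\int |G_z(x,y)| \intd x \Bigr\}\,|u|_p \le K\,|u|_p
$$
for some constant $K$ independent of $z$ and $p$. Integrating along $\Gamma$ then delivers $|Pu|_p \le (|\Gamma|/2\pi)\,K\,|u|_p$. For $u \in H^1 \cap \Leb^p$, $Pu$ lies in $H^1$ by the spectral calculus on $H^1$ and in $\Leb^p$ by the preceding bound, so $P$ maps $H^1 \cap \Leb^p$ into itself and is $\Leb^p$-continuous; the same argument applies verbatim to $Q = \id - P$. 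The genuinely non-routine step is the Combes--Thomas estimate: obtaining exponential decay of the resolvent kernel with constants uniform along $\Gamma$, together with the elliptic regularity needed to pass from $\Leb^2$-operator bounds to pointwise kernel bounds, is where the real work lies; the rest (contour construction, Schur's test, passage to the integral) is formal.
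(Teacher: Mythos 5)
Your overall strategy---write $P$ as a contour integral of the resolvent and transfer $\Leb^p$-boundedness from the resolvent to the projector---is sound and genuinely different from the paper's. The paper does not prove any kernel estimate: it simply invokes Hempel and Voigt's theorem that $\sigma(D_p)$ is $p$-independent \emph{and} that the resolvents $(D_p-\lambda)^{-1}$ are consistent across $p$, and then applies the same Riesz integral on $\Leb^p$ to get $P_p = P_2$ on the intersection. Your route is more self-contained, but it runs into a genuine technical gap.

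The problem is the claimed kernel bound $|G_z(x,y)| \le C\,e^{-c|x-y|}$. For $N\ge 2$ this is simply false: the resolvent kernel of $-\Delta + V - z$ has a diagonal singularity, of order $|x-y|^{-(N-2)}$ for $N\ge 3$ (a logarithm for $N=2$), inherited from the free resolvent. Your intermediate step is also too optimistic: local elliptic regularity with $\Leb^\infty$ coefficients gives $H^2_{\loc}$ regularity, and $H^2 \hookrightarrow \Leb^\infty$ fails once $N\ge 4$; more fundamentally, an $\Leb^2\to\Leb^2$ bound for $e^{\mu\cdot x}(D-z)^{-1}e^{-\mu\cdot x}$ does not turn into a pointwise bound on the kernel by elliptic regularity alone. (Even the boundedness $T:\Leb^2\to\Leb^\infty$, when it holds, only gives $\sup_x |G_z(x,\cdot)|_2<\infty$, not a pointwise bound.) As stated, the Schur test step therefore rests on an estimate that cannot be proven.

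The gap is repairable, but at the price of the ``formal'' remainder becoming real work. Two standard fixes: (i) replace the pointwise bound by the weaker Schur conditions $\sup_x\int|G_z(x,y)|\intd y<\infty$ and its transpose, which do hold because the diagonal singularity is locally integrable---but then you must actually establish that integrable-plus-exponentially-decaying bound, which is not a consequence of Combes--Thomas on $\Leb^2$ alone; or (ii) iterate, proving a genuine pointwise exponential bound for the kernel of $(D-z)^{-k}$ with $2k > N/2$ (so that $H^{2k}\hookrightarrow\Leb^\infty$), and rewrite $P$, via integration by parts along $\Gamma$, as a contour integral of $(z-z_0)^{k-1}(z-D)^{-k}$. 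Either way, you also need the Combes--Thomas $\Leb^2$ estimate uniform in $z\in\Gamma$ (which is fine, since $\Gamma$ is compact and at positive distance from $\sigma(D)$), plus a careful passage from weighted $\Leb^2$ bounds to kernel bounds. Minor: the conjugation identity should read $e^{\mu\cdot x}(D-z)e^{-\mu\cdot x} = D - z + 2\mu\cdot\nabla - |\mu|^2$; the signs do not affect the argument. Compared with simply citing Hempel--Voigt, your approach buys self-containment but requires substantially more analysis than the sketch acknowledges.
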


\begin{proof}
  If $\tilde P, \tilde Q : \Leb^2(\IR^N) \to \Leb^2(\IR^N)$
  denote the projectors on the negative and positive eigenspaces of
  $D$ in $\Leb^2$ respectively,  it is well known 
  (see e.g.~\cite{Charles}, section~8) that 
  $P = \tilde P \restrict_{H^1}$ and $Q = \tilde Q \restrict_{H^1}$.
  So it is sufficient to prove the proposition for 
  $\Leb^2(\IR^N)$, $\tilde P$, and $\tilde Q$ instead of
  $H^1(\IR^N)$,   $P$, and $Q$.

  Denote $\Leb^p(\IR^N; \IC) = \Leb^p(\IR^N) + i \Leb^p(\IR^N)$ the
  complexification of $\Leb^p(\IR^N)$ and let $D_p$ be the
  operator
  \begin{align*}
    &D_p : \Leb^p(\IR^N; \IC) \to \Leb^p(\IR^N; \IC) : 
           u \mapsto -\Delta u + V(x) u\cr
    &\Dom(D_p) := \bigl\{ 
                 u \in \Leb^p(\IR^N; \IC) : D_p u \in \Leb^p(\IR^N; \IC)
                 \bigr\}.    
  \end{align*}
  %%Note that $D_2$ is the complexification of $D$.
  It is proven in~\cite{Hempel} that the spectrum $\sigma(D_p) \subset \IR$
  is independent of $p \in [1,+\infty]$ and moreover, for any
  $\lambda \notin \sigma(D_p) = \sigma(D_2)= \sigma(D)$,
  \begin{equation}
    (D_p - \lambda)^{-1} = (D_2 - \lambda)^{-1}
    \quad\hbox{on } \Leb^p(\IR^N;\IC) \cap \Leb^2(\IR^N;\IC).
    \label{Hempel}
  \end{equation}
  Then $0 \notin \sigma(D_p)$ and we may speak of the
  (eigen)projectors $P_p$, $Q_p$ on the negative and positive
  eigenspaces of $D_p$.  Since $\sigma(D_p)$ is bounded 
  below, the projector $P_p$ may be defined as follows:
  if $\Gamma$ is a right-oriented curve around the negative part
  of $\sigma(D_p)$ (but not crossing the spectrum), then
  (see~\cite{Kato}):
  $$
    P_p = {\textstyle{1 \over 2 i \pi}}  \int_\Gamma
          (D_p - \lambda)^{-1} \intd \lambda.$$
  Accordingly, (\ref{Hempel}) yield
  $$
    P_p = P_2
    \quad\hbox{on } \Leb^p(\IR^N;\IC) \cap \Leb^2(\IR^N;\IC).$$
  That concludes the proof because $\tilde P = P_2 \restrict_{
  \Leb^2(\IR^N)}$ (and $\tilde Q = \id - \tilde P$).
\end{proof}

\begin{corollary}
\label{Lp-direct-sum}
  Let $Y\oplus Z$ be the splitting of $H^1(\IR^N)$ according
  to the positive and negative part of $\sigma(D)$.  Then, for
  all $p \in [2,\2]$,
  $$
    \Leb^p(\IR^N) = \cl_{\Leb^p} Y  \oplus  \cl_{\Leb^p} Z.$$
  %In particular, the projections from
  %$Y + Z$ onto $Y$ (parallel to $Z$) and $Z$ (parallel to $Y$)
  %are  $\Leb^p$-continuous.
  \end{corollary}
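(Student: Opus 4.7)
The plan is to lift the projections $P$ and $Q$, which by Proposition \ref{LebCountinuityProj} are $\Leb^p$-continuous on $H^1 \cap \Leb^p$, to bounded complementary projections $\tilde P, \tilde Q$ on all of $\Leb^p(\IR^N)$, and then identify their ranges with $\cl_{\Leb^p} Y$ and $\cl_{\Leb^p} Z$ respectively. The direct sum decomposition then follows from the standard fact that a continuous projection on a Banach space splits it into its range and the range of its complement.

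Since $p < \infty$ and $\D(\IR^N) \subset H^1 \cap \Leb^p$, the subspace $H^1 \cap \Leb^p$ is dense in $\Leb^p(\IR^N)$. The uniform bound $|Pu|_p \le C_p |u|_p$ on $H^1 \cap \Leb^p$ furnished by Proposition \ref{LebCountinuityProj} therefore yields a unique bounded extension $\tilde P : \Leb^p(\IR^N) \to \Leb^p(\IR^N)$, and similarly a unique bounded extension $\tilde Q$. The algebraic relations $P^2 = P$, $PQ = QP = 0$, and $P + Q = \id$ hold on $H^1 \cap \Leb^p$ and pass to the extensions by $\Leb^p$-continuity and density, so $\tilde P$ and $\tilde Q$ are complementary bounded projections on $\Leb^p(\IR^N)$.

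It remains to identify their ranges. For any $u \in \Leb^p$, choose $u_n \in H^1 \cap \Leb^p$ with $u_n \to u$ in $\Leb^p$; then $\tilde P u = \lim_n P u_n$ in $\Leb^p$ with $Pu_n \in Y$, so $\operatorname{range} \tilde P \subset \cl_{\Leb^p} Y$. Conversely, every $y \in Y$ satisfies $\tilde P y = Py = y$, and since the range of a bounded projection on a Banach space is closed, this forces $\cl_{\Leb^p} Y \subset \operatorname{range} \tilde P$. Hence $\operatorname{range} \tilde P = \cl_{\Leb^p} Y$, and symmetrically $\operatorname{range} \tilde Q = \cl_{\Leb^p} Z$. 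Writing $u = \tilde P u + \tilde Q u$ gives $\Leb^p = \cl_{\Leb^p} Y + \cl_{\Leb^p} Z$, and if $u \in \cl_{\Leb^p} Y \cap \cl_{\Leb^p} Z$ then $u = \tilde P u = \tilde P \tilde Q u = 0$, so the sum is direct. There is no real obstacle here: all the analytic content is already contained in Proposition \ref{LebCountinuityProj}, and this corollary is a purely functional-analytic consequence of possessing a bounded extension of the spectral projector to $\Leb^p$.
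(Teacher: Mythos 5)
Your proof is correct and takes essentially the same route as the paper's: both extend $P$ and $Q$ by density to bounded projections $\tilde P,\tilde Q$ on $\Leb^p(\IR^N)$ and then read off the splitting from the algebra of these projections. The only cosmetic difference is that you explicitly identify $\operatorname{range}\tilde P=\cl_{\Leb^p}Y$ via the closed-range property of bounded projections, whereas the paper reaches the same two conclusions (triviality of the intersection and $\Leb^p=\cl_{\Leb^p}Y+\cl_{\Leb^p}Z$) more directly from $P_pu_n\to P_pu$ and $PZ=\{0\}$.
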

  
\begin{proof}
  Let $P$ and $Q$ be the projectors  of proposition~\ref{LebCountinuityProj}.
  Since $P$, $Q$ are $\Leb^p$-continuous and $\D(\IR^N) \subset Y + Z$
  is dense in $\Leb^p(\IR^N)$, $P$ and $Q$ extend to continuous
  projectors $P_p$ and $Q_p$ on $\Leb^p(\IR^N)$ which leave invariant
  $\cl_{\Leb^p}Y$ and $\cl_{\Leb^p}Z$ respectively.
  From $PZ = \{0\}$ we infer $P_p (\cl_{\Leb^p}Z) = \{0\}$.
  Thus 
  $$
    \cl_{\Leb^p} Y  \cap \cl_{\Leb^p} Z = \{0\}.$$
  Now let $u \in \Leb^p(\IR^N)$.  By density there exists a sequence
  $(u_n) \subset \D(\IR^N)$ such that $u_n \to u$ in $\Leb^p$.
  By continuity,
  $$
    P_p u_n  \xrightarrow{\Leb^p}  P_p u  \in \cl_{\Leb^p} Y,
    \qquad
    Q_p u_n  \xrightarrow{\Leb^p}  Q_p u  \in \cl_{\Leb^p} Z,$$
  and so $u = P_p u + Q_p u \in  \cl_{\Leb^p} Y  + \cl_{\Leb^p} Z$.
\end{proof}

\section{Existence of a convex lower bound}
\label{Existenceof H}
%----------------------------------------------------------------------

This appendix is devoted to some elementary calculus showing
the existence of a convex lower bound of $\min\{ |u|^\beta, |u|^\alpha \}$
with the same asymptotic behavior.

\penalty-1000
\begin{lem}
  \label{ConvexLowerBound}
  Let $\beta \ge \alpha > 2$. There exists an even function 
  $H \in\C^1\bigl(\IR; [0,+\infty)\bigr)$ such that
  \begin{enumerate}[label={(\roman{*})}]
  \item\label{upper-H}  for all $u \in \IR$, 
    $H(u) \le \min\{ |u|^\beta, |u|^\alpha \}$\textup;
  \item\label{convex-H} $H$ is convex;
  \item\label{H-near0} $\lim_{u \to 0}        H(u)\, |u|^{-\beta} = 1$\textup;
  \item $\lim_{|u| \to \infty} H(u)\, |u|^{-\alpha} =1$\textup;
  \item\label{pseudo-homog} for all $u \in \IR$ and $t \ge 0$,
    $\min\{ t^\alpha, t^\beta\} H(u)
    \le H(tu)
    \le \max\{ t^\alpha, t^\beta\} H(u)$.
  \end{enumerate}
\end{lem}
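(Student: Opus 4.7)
The plan is to construct $H$ explicitly from a carefully chosen derivative. For $u \ge 0$ set
\[
  g(s) := \min\{\beta s^{\beta-1},\, \alpha s^{\alpha-1}\},
  \qquad
  h(u) := \int_0^u g(s) \intd s,
\]
and let $H(u) := h(|u|)$. When $\beta > \alpha$ the two curves $s \mapsto \beta s^{\beta-1}$ and $s \mapsto \alpha s^{\alpha-1}$ meet at exactly one positive point $s_* := (\alpha/\beta)^{1/(\beta-\alpha)} \in (0,1)$, with the first being the smaller on $(0,s_*)$ and the second smaller on $(s_*,\infty)$; the degenerate case $\beta = \alpha$ is trivial since then $H(u) = |u|^\alpha$ satisfies everything directly. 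Because $g$ is continuous, nondecreasing, and vanishes at $0$, the primitive $h$ lies in $\C^1([0,\infty))$ and is convex with $h'(0) = 0$; consequently $H$ is even, of class $\C^1$, nonnegative, and convex on $\IR$, which gives \ref{convex-H}.

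Items \ref{upper-H}, \ref{H-near0}, and the analogous asymptotic at infinity are then essentially immediate. Integrating the pointwise bounds $g(s) \le \beta s^{\beta-1}$ and $g(s) \le \alpha s^{\alpha-1}$ from $0$ to $u$ yields $h(u) \le u^\beta$ and $h(u) \le u^\alpha$. Moreover, on $[0, s_*]$ one actually has $h(u) = u^\beta$, which gives \ref{H-near0} outright; and a short calculation on $[s_*, \infty)$ gives $h(u) = u^\alpha - c$ with $c := s_*^\alpha - s_*^\beta > 0$, whence $h(u)/u^\alpha \to 1$ as $u \to \infty$.

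The main step is the pseudo-homogeneity \ref{pseudo-homog}. The natural move is to pass to logarithmic coordinates: set $\psi(\sigma) := \log h(e^\sigma)$, so that $\psi'(\sigma) = u g(u)/h(u)$ with $u = e^\sigma$. On $(-\infty, \log s_*]$ this ratio is identically $\beta$, while on $[\log s_*, \infty)$ one computes $\psi'(\sigma) = \alpha u^\alpha / (u^\alpha - c)$, a quantity equal to $\beta$ at $u = s_*$ (by the defining relation for $s_*$) and decreasing strictly to $\alpha$ as $u \to \infty$. Hence $\alpha \le \psi' \le \beta$ throughout $\IR$. Integrating this inequality between $\log u$ and $\log(tu)$ for $t, u > 0$ and exponentiating yields
\[
  \min\{t^\alpha, t^\beta\}\, h(u) \le h(tu) \le \max\{t^\alpha, t^\beta\}\, h(u),
\]
with the direction of the integration (and hence of the inequalities) switching at $t = 1$. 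Evenness of $H$ then delivers \ref{pseudo-homog}.

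The only verification with any real content is the monotonicity of $\alpha u^\alpha/(u^\alpha - c)$ on $[s_*, \infty)$; this is a one-line derivative computation hinging on the positivity of $c$ (equivalently $s_* < 1$, which holds as soon as $\beta > \alpha$). Beyond that, everything is routine calculus, and I anticipate no genuine obstacle.
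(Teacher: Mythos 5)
Your construction of $H$ is exactly the paper's (set the derivative to $\min\{\beta|u|^{\beta-1},\alpha|u|^{\alpha-1}\}$, integrate, symmetrize), and items \ref{upper-H}--(iv) are obtained the same way from the resulting closed form $H(u)=|u|^\beta$ for $|u|\le s_*$ and $H(u)=|u|^\alpha-c$ for $|u|\ge s_*$. The only divergence is \ref{pseudo-homog}: the paper notes that the $\min$-structure gives $\min\{t^{\alpha-1},t^{\beta-1}\}\,h(u)\le h(tu)\le\max\{t^{\alpha-1},t^{\beta-1}\}\,h(u)$ at the level of the derivative and then integrates via the substitution $s\mapsto ts$, whereas you bound the logarithmic derivative $u\,g(u)/h(u)$ between $\alpha$ and $\beta$ and integrate in $\log$-coordinates; both are correct and essentially equivalent, the paper's being a touch more immediate and yours a touch more conceptual.
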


\begin{rem}
  As consequences of the above facts, we get
  $H(0) = 0$, $\partial H(0)=0$, and $H(u) > 0$ for all $u\neq 0$.
\end{rem}

\begin{proof}
  Let $h \in \C(\IR;\IR)$ be the odd function defined by
  $
    h(u) := \min\bigl\{ \beta  |u|^{\beta-1},\penalty-2000
                        \alpha |u|^{\alpha-1} \bigr\}$
  for $u \ge 0$ and $H(u) := \int_0^u h$.
  The map $G(u) := \min\{ |u|^\beta, |u|^\alpha \}$ is $\C^1$ on
  $\IR\setminus\{ \pm 1\}$.  Let $\partial G$ be its derivative.
  It is clear that $G(u) = \int_0^u \partial G$ for all $u \in \IR$.
  Then (\ref{upper-H}) follows from $h(u) \le \partial G(u)$
  for all $u \in [0, +\infty) \setminus\{1\}$ and the evenness of $H$.
  Since $h$ is increasing, $H$ is (strictly) convex.
  An immediate  computation shows
  \begin{equation*}
    H(u) =
    \begin{cases}
      |u|^\beta&  \text{if } |u| \le \rho,\\
      \kappa + |u|^\alpha&  \text{otherwise},
    \end{cases}
  \end{equation*}
  for some $0< \rho \le 1$ and $\kappa := \rho^\beta -\rho^\alpha$.
  That proves the asymtotic behaviors of~$H$.
  Finally, the definition of $h$ implies that, for $t \ge 0$ and $u \ge 0$,
  $$
    \min\{ t^{\alpha-1}, t^{\beta-1} \} h(u)
    \le h(tu)
    \le \max\{ t^{\alpha-1}, t^{\beta-1} \} h(u)$$
  and then (\ref{pseudo-homog}) follows
  by integrating and taking into account the evenness of~$H$.
\end{proof}

\noindent
\textit{Acknowledgments.}\quad
The author is grateful to 
C.~A.~Stuart for fruitful
discussions.

\end{document}